\documentclass[12pt,reqno,twoside]{amsart}
\usepackage{amsmath, amssymb,latexsym}
\usepackage[all,cmtip]{xy}
\usepackage{enumitem}

\newtheorem{thm}{Theorem}[section]
\newtheorem{lem}[thm]{Lemma}
\newtheorem{prop}[thm]{Proposition}

\newtheorem{defn}[thm]{Definition}

\newtheorem{question}{Question}

\numberwithin{equation}{section}


\newcommand{\ran}{{\rm{ran}}}

\newcommand{\N}{{\mathbb{N}}}






\begin{document}

\author[Peter Cholak ]{Peter Cholak}

\address{\tt Department of Mathematics, University of Notre Dame, 255
  Hurley, Notre Dame, IN 46556, USA} \email{\tt Peter.Cholak.1@nd.edu}

\author[Rodney G. Downey]{Rodney G. Downey}

\address{\tt School of Mathematics, Statistics and Operations
  Research, Victoria University, P.O. Box 600, Wellington, New
  Zealand} \email{\tt Rod.Downey@vuw.ac.nz}

\author[Gregory Igusa ]{Greg Igusa}

\address{\tt Department of Mathematics, University of Notre Dame, 255
  Hurley, Notre Dame, IN 46556, USA} \email{\tt
  Gregory.Igusa.1@nd.edu}

\thanks{Downey was supported by the Marsden Fund of New Zealand.  This
  work was partially supported by a grant from the Simons Foundation
  (\#315283 to Peter Cholak).  Igusa was partially supported by
  EMSW21-RTG-0838506.  Research was (partially) completed while the
  authors were visiting the Institute for Mathematical Sciences,
  National University of Singapore in 2014}

\title[FIP]{Any FIP real computes a 1-generic}

\maketitle

\begin{abstract}

  We construct a computable sequence of computable reals
  $\langle X_i\rangle$ such that any real that can compute a
  subsequence that is maximal with respect to the finite intersection
  property can also compute a Cohen 1-generic.  This is extended to
  establish the same result with 2IP in place of FIP.  This is the
  first example of a classical theorem of mathematics has been found
  to be equivalent, both proof theoretically and in terms of its
  effective content, to computing a 1-generic.

\end{abstract}

\section{Introduction}
Of all the axioms of mathematics, perhaps the most well-known is the
Axiom of Choice; which has many classically equivalent forms. These
include, for example Zorn's Lemma, the Well-Ordering Principle, etc.
Rubin and Rubin \cite{RR} give many such classically equivalent
versions.

These equivalent forms do not, however, remain equivalent when we
beging to examine them through the fine grained lenses of either
computability theory or proof theory, especially via reverse
mathematics.  One of the classic versions of the axiom of choice is
called the \emph{finite intersection property.}  In the following
definition we will think of reals as being subsets of ${\mathbb N}$.
We will often abuse notation and refer to reals interchangeably as
sets or vice-versa.

\begin{defn}

  A set (or sequence) of reals has the finite intersection property if
  given any finite subset of the reals (or any finite set of reals
  that appear in the sequence) that set has a nonempty intersection.

\end{defn}

In \cite{DM12,DM13}, Dzhafarov and Mummert began a systematic study of
the proof-theoretical and computability-theoretical strength of
various incarnations of the axiom of choice and related principles of
finite character.  Of concern to the present paper, Dzhafarov and
Mummert initiated an investigation of such properties for the finite
intersection principle.  In this paper we will be concerned with a
logical analysis of this principle.

As with Dzhafarov and Mummert's analysis, it is quite important how we
present the family. An \emph{instance} of an intersection problem is a
sequence of reals, subsets of ${\mathbb N}$,
${\mathcal X}=\{A_i\mid i<\omega\}$.  We allow repetitions in the
family.  There are possibly many equivalent enumerations of the same
families and these may not be computationally equivalent.  We will
write that $\hat{{\mathcal X}}\subseteq {\mathcal X}$ if every element
of the sequence $\hat{{\mathcal X}}$ is in ${\mathcal X}.$

Thus we will consider the following.

\begin{defn}
  Let $\mathbb{X}=\langle X_i\mid i<\omega \rangle$ be a uniformly
  computable sequence of computable reals, not all empty. Let
  $f:\N\rightarrow\N$. Then, $f$ is FIP with respect to $\mathbb{X}$
  if $\{X_{f(i)}\mid i<\omega \}$ is maximal with respect to the
  finite intersection property, that is to say, $\{X_{f(i)}\}$ has the
  finite intersection property, and if
  $\{X_{f(i)}\}\subset \{X_{g(i)}\}$, then $\{X_{g(i)}\}$ does not
  have the finite intersection property.

  In this case, we will frequently abuse notation and say that
  $\langle X_{f(i)}\rangle$ or $\{ X_{f(i)}\}$ is FIP.

\end{defn}

A related principle is called $n$IP\footnote{This principle was
  referred to as $\overline{D}_n-IP$ by Dzhafarov and Mummert.  We
  follow the terminology of Downey, Diamondstone, Greenberg and
  Turetsky \cite{DDGT}.}, and it is the same as the above, except it
only seeks each \emph{$n$-tuple} of sets to have nonempty
intersection.  The principle 2IP thus says that every family of sets
has a maximal subfamily where each pair of sets in the family has
nonempty intersection.


In \cite{DM13}, Dzhafarov and Mummert showed that there was a
computable instance of FIP with no computable solution.

\begin{defn}[Dzhafarov and Mummert \cite{DM13}] A degree ${\bf a}$ is
  FIP (or FIP-bounding) if
  given any computable $\mathbb{X}$ as above, ${\bf a}$ can compute a
  function $f$ that is FIP with respect to $\mathbb{X}$.  Similarly we
  can define a degree to be $n$IP.
\end{defn}

It is easy to see that ${\bf 0'}$ is FIP and $n$IP, simply by taking a
computable family and making a maximal FIP subfamily one set at a
time, noting that nonempty intersection of computable sets is
$\Sigma_1^0$.  Dzhafarov and Mummert established each FIP degree is
also $n$Ip for all $n$, and any $(n+1)$IP degree is also $n$IP.  One
of the fundamental questions left open by Dzhafarov and Mummert was
whether any of these implications are proper.  One of the corollaries
of the work in the present paper is that the answer is, somewhat
surprisingly, no.

Dzhafarov and Mummert demonstrated that FIP has relationships with a
number of well-studied principles of Reverse Mathematics and
computability theory. These include cohesive sets, and the Atomic
Model Theorem from model theory.  Of relevance to us will be the
following which relates FIP to hyperimmune and 1-generic degrees.  The
reader should recall that a degree ${\bf a}$ is called
\emph{hyperimmune} if ${\bf a}$ computes a function $f$ which is not
dominated by any computable function: if $g$ is computable
$\exists^\infty s(f(s)>g(s)).$ Some authors refer to degrees which are
not hyperimmune as \emph{computably dominated} for this reason.

The reader should recall that ${\bf a}$ is called 1-generic if it
contains a 1-generic set $G$, where $G$ is 1-generic if it is Cohen
generic for 1-quantifier arithmetic. Equivalently, for all c.e. sets
of strings $V$, there is a string $\tau\prec G$ such that either
$\tau\in V$ or for all $\sigma\in V$, $\tau\not\preceq \sigma$.  Such
sets encode the basic behaviors coming to all finite extension
arguments.  All 1-generic sets are hyperimmune.

\begin{thm}[Dzhafarov and Mummert \cite{DM13}]
  Every FIP degree is hyperimmune. The following are FIP degrees.
  \begin{enumerate}
  \item All nonzero c.e. degrees.
  \item Degrees that are ${\bf 0'}$-hyperimmune.
  \item Degres that compute 1-generics which meet a prescribed
    sequence of dense $\Pi_1^0$ sets of strings.
  \end{enumerate}
\end{thm}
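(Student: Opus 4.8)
The statement packages four logically independent facts — that FIP degrees are hyperimmune, together with three sufficient conditions — and the plan is to prove each by its own argument, all of a finite‑extension or combinatorial flavor. For the necessary condition I would argue contrapositively: given a computably dominated degree ${\bf a}$, build a single uniformly computable sequence $\mathbb X=\langle X_i\rangle$ of computable sets with no ${\bf a}$-computable FIP solution. The leverage is that any solution $f\le_T{\bf a}$ carries with it an ${\bf a}$-computable ``running witness'' $w$ with $w(i)\in\bigcap_{k\le i}X_{f(k)}$ (it exists because $\{X_{f(k)}\}$ has FIP), and, ${\bf a}$ being computably dominated, every such $w$ is dominated by some total computable $\varphi_e$. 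So it suffices to build $\mathbb X$ in columns, the $e$-th column engineered so that every maximal FIP subfamily must, inside that column, include a nested sequence of (essentially cofinite) sets whose common elements are forced above $\varphi_e$ at the relevant stage, with a few pairwise‑incompatible ``blocker'' sets per column destroying the cheap maximal subfamilies that would otherwise dodge the commitment; then the running witness of any maximal subfamily outruns every $\varphi_e$ infinitely often, contradicting computable domination. I expect the blocker construction — forcing genuine large‑witness commitments in every column while keeping $\mathbb X$ uniformly computable, with bounds robust against $\varphi_e$ being partial or slow — to be the technical heart of the whole theorem.

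For the third sufficient condition, the plan is a forcing argument whose conditions are pairs $(S,c)$ with $S$ a finite set of indices and $c\in\bigcap_{i\in S}X_i$, a decidable relation, ordered by ``$S$ grows, keeping some valid common element.'' Then FIP of $\mathcal F=\bigcup_n S_n$ is automatic, since a finite $F\subseteq\mathcal F$ already lies in some $S_n$ and so has the common element $c_n$. Maximality is forced by meeting, for each $j$, the set $D_j$ of conditions $(S,c)$ such that $j\in S$ or $\bigcap_{i\in F\cup\{j\}}X_i=\emptyset$ for some $F\subseteq S$: from any condition either $X_j$ can be added (landing in $D_j$) or it cannot, in which case $S$ itself witnesses the exclusion of $X_j$, so $D_j$ is dense; and $D_j$ is $\Pi_1^0$ because $S$ is finite and each clause $\bigcap_{i\in F\cup\{j\}}X_i=\emptyset$ is $\Pi_1^0$. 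A generic meeting the computable sequence $\langle D_j\rangle$ then computes a maximal FIP subfamily, which is precisely what a 1-generic meeting the prescribed dense $\Pi_1^0$ sets delivers. The work here is bookkeeping: coding conditions as strings so that ``meet $D_j$'' matches the hypothesis, and checking that the decoded $\mathcal F$ is total and coincides with the \emph{greedy} subfamily.

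For the remaining two sufficient conditions the plan, in each case, is to compute the greedy maximal subfamily $\mathcal F$ (put $X_i\in\mathcal F$ iff $\{X_j:j<i,\ j\in\mathcal F\}\cup\{X_i\}$ still has FIP), the only obstruction being that ``still has FIP'' is a finite conjunction of $\Sigma_1^0$ statements, hence a priori a ${\bf 0'}$-question. For a nonzero c.e.\ degree ${\bf a}=\deg(A)$ I would resolve this by permitting: hold the tentative greedy value of $X_i$, run the common‑element searches in the background, and tie each reconsideration of $X_i$ to a fresh element entering $A$ below a slowly growing bound, so that only finitely many reconsiderations occur and the final value can be read off $A$-computably once $A$ below the bound has settled, with noncomputability of $A$ guaranteeing that enough reconsiderations take place to reach the truth. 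For a ${\bf 0'}$-hyperimmune degree ${\bf a}$ I would instead fix an ${\bf a}$-computable $f$ dominated by no ${\bf 0'}$-computable function and use it to bound the searches: with $M(i)$ the least stage by which every nonempty intersection among $X_0,\dots,X_i$ has produced a common element — a ${\bf 0'}$-computable function of $i$ — we get $f(i)\ge M(i)$ for infinitely many ``good'' $i$, and at a good $i$ the greedy decisions on indices $\le i$ computed with search bound $f(i)$ are already final. The residual difficulties are exactly the internal mechanics of these two devices: arranging the permitting so that reconsiderations are finite yet sufficient, and, for the ${\bf 0'}$-hyperimmune case, extracting a single ${\bf a}$-computable value for each index out of the good stages despite their not being ${\bf a}$-recognizable, rather than merely an ${\bf a}'$-computable approximation.
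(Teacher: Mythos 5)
A preliminary remark: this theorem appears in the paper only as quoted background, attributed to Dzhafarov and Mummert \cite{DM13}, and the paper contains no proof of it, so there is no in-paper argument to measure you against; what follows assesses your proposal on its own terms. Your decomposition into four independent facts and your choice of technique for each (contrapositive via domination of a witness function; permitting below a noncomputable c.e.\ set; escaping $\emptyset'$-domination; forcing with dense $\Pi^0_1$ sets) are the standard and, as far as I can tell, correct high-level moves. Part (3) is essentially complete: the conditions $(S,c)$ with $c\in\bigcap_{i\in S}X_i$, the dense sets $D_j$, the verification that each $D_j$ is dense and $\Pi^0_1$ (a finite disjunction of $\Pi^0_1$ clauses plus a decidable clause, noting that for a \emph{finite} family FIP is equivalent to nonemptiness of the total intersection), and the readout of a maximal family from a filter meeting all $D_j$ are all sound; only the coding of conditions as binary strings remains, and that really is bookkeeping.

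The other three parts each have a genuine gap at precisely the point you flag, and in each case that point is the entire content of the result. For hyperimmunity: the running witness $w(i)=\min\bigcap_{k\le i}X_{f(k)}$ is indeed ${\bf a}$-computable and dominated, but the adversary controls the enumeration order $f$, so your blockers must force \emph{every} maximal subfamily, under \emph{every} ordering of it, to exhibit large least witnesses infinitely often; nothing in the sketch rules out a solution that front-loads sets with small common elements and postpones the ``commitment'' sets forever, and no concrete column design is given. For the c.e.\ case: permitting guarantees ${\bf a}$-computability of your committed answers, but correctness requires the converse direction --- that if $A$ fails to permit a needed reconsideration then $A$ would be computable --- and that argument needs the reconsideration requests to form a c.e.\ sequence, cofinal in the right sense, from whose denial one could decide membership in $A$; the greedy family's corrections are triggered by $\Sigma^0_1$ events, which is promising, but you have not said what computation of $A$ a failure of permission would yield, and that is where the work lives. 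For the $\emptyset'$-hyperimmune case you correctly observe the fatal issue yourself: errors of the bounded greedy procedure are not stable omissions --- wrongly excluding one set changes all subsequent greedy decisions, so the outputs at different ``good'' parameters need not cohere on any fixed index --- and since good stages are not ${\bf a}$-recognizable you obtain only a limit computation, not an ${\bf a}$-computable solution. None of these is a presentational defect; each is the technical core of the corresponding argument in \cite{DM13}, and the proposal as written closes none of them.
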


Dzhafarov and Mummert also partially classified how the FIP degrees
relate to other bounding degrees such as the atomic model bounding
degrees and prime model bounding degrees (see, e.g.  \cite{CHKS,HSS}).

In view of this, the precise classification of the FIP and $n$IP
degrees seems a very intriguing problem.  Recently, Diamondstone,
Downey, Greenberg and Turetsky \cite{DDGT} made some significant
advances for the classification, as well as establishing some basic
results. They showed that the FIP and 2IP degrees have acceptable
universal families, meaning that if a degree computes a solution to
the universal family, then it will compute a solution to any family.
They also proved that if $G$ is 1-generic then the Turing degree of
$G$ is 2IP and hence FIP.\footnote{Note that \emph{more} families have
  2IP than FIP. For example, the family consisting of the 3 sets
  $\{1,2\},\{2,3\} $ and $\{3,1\}$ has 2IP but not FIP.} Those authors
also looked at finite variations of FIP and $n$IP showing that the
principles differed in strength for some such variations, and also
looked at the situation where \emph{infinite} intersection was asked
for. In this last formulation, the precise strength needed was
found. (Namely ${\mathbf a}$ can compute a solution with infinite
intersection property for all computable families iff
${\bf a\ge 0''}).$

Diamondstone, Downey, Greenberg and Turetsky \cite{DDGT} gave a
complete answer to the question of FIP and 2IP bounding in a limited
setting.

\begin{thm}[Diamondstone, Downey, Greenberg and Turetsky \cite{DDGT}]
  A $\Delta_2^0$ degree is FIP iff it is 2IP iff it computes a
  1-generic.
\end{thm}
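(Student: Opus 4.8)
The statement decomposes into three implications, two of which are already available, so the first step is to isolate the one that carries all the content. Since every FIP degree is $n$IP for every $n$ (Dzhafarov and Mummert \cite{DM13}), FIP implies 2IP; and since a degree that computes a 1-generic is FIP, hence 2IP (Diamondstone, Downey, Greenberg and Turetsky \cite{DDGT}, using also that these properties are closed upward under $\le_T$), computing a 1-generic implies FIP. Neither of these uses the hypothesis that $\mathbf{a}$ is $\Delta_2^0$. Thus the whole content of the theorem is the single implication that a $\Delta_2^0$ 2IP degree computes a 1-generic; once that is in hand, the cycle FIP $\Rightarrow$ 2IP $\Rightarrow$ (computes a 1-generic) $\Rightarrow$ FIP closes and the three properties coincide on $\Delta_2^0$ degrees. (In particular 2IP then implies FIP on $\Delta_2^0$ degrees, although in general strictly more families have 2IP than FIP.) The plan is to prove that remaining implication by a forcing construction.

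The idea is to fix once and for all a computable instance $\mathbb{X}=\langle X_i\mid i<\omega\rangle$ of the 2IP problem whose $\Delta_2^0$ solutions compute 1-generics: then a $\Delta_2^0$ 2IP degree $\mathbf{a}$ computes some solution $f$, and since $f\le_T\mathbf{a}\le_T\emptyset'$ the function $f$ is $\Delta_2^0$, carries a computable approximation $f=\lim_s f_s$, and lets $\mathbf{a}$ compute a 1-generic. I would build $\mathbb{X}$ in two interacting layers. A \emph{tree layer} of sets $A_\sigma$ ($\sigma\in 2^{<\omega}$) is chosen so that $A_\sigma\cap A_\tau\neq\emptyset$ exactly when $\sigma$ and $\tau$ are $\preceq$-comparable --- for instance, let $A_\sigma$ hold one marker for each nonempty initial segment of $\sigma$ together with infinitely many numbers private to $\sigma$ --- so that the indices occurring in any \emph{maximal} pairwise-intersecting subfamily form a maximal chain in $2^{<\omega}$, i.e. the set of all initial segments of a unique $X\in 2^\omega$, with $X$ uniformly computable from the solution. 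A \emph{requirement layer} then adds, for each c.e.\ set of strings $W_e$, further sets whose intersection pattern with the $A_\sigma$ is rigged so that maximality of the subfamily forces the path $X$ to decide $W_e$ along an initial segment --- either some $\sigma\prec X$ lies in $W_e$, or some $\sigma\prec X$ has no extension in $W_e$ --- the mechanism being that otherwise one of the new sets could be adjoined to the subfamily without destroying pairwise intersection, contradicting maximality. Deciding every $W_e$ along an initial segment is exactly 1-genericity, so the path $G:=X$ read off $f$ is 1-generic, and $G\le_T f\le_T\mathbf{a}$.

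The hard part, and the reason this yields only the $\Delta_2^0$ conclusion, is that maximality of $f$ is a genuinely non-effective property (roughly $\Pi_2^0$ in $f$), so the requirement layer cannot force the path of \emph{every} solution to be generic --- if it could, no hypothesis on $\mathbf{a}$ would be needed. The ``strong avoidance'' clause ($\sigma$ has no extension in $W_e$) is $\Pi_1^0$, not computable, so the requirement sets must be assembled from an enumeration-stage approximation to it, and the conclusion has to be recovered from $f$ together with the approximation $f_s$: one uses $f_s$ to detect in the limit which requirement sets the solution has omitted and that the omissions occur for the correct reason, and a true-stages-style settling argument to see that $G\restriction n$ stabilizes by an $f$-computable stage, whence $G\le_T f$. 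Engineering the requirement layer so that these omissions are stable enough to be tracked by a computable approximation to $f$ while remaining tight enough to force 1-genericity and nothing stronger is where essentially all the work lies; and it is exactly this bookkeeping that collapses without the $\Delta_2^0$ hypothesis, which is why the present paper must instead produce a family \emph{every} solution of which --- not merely every $\Delta_2^0$ one --- computes a 1-generic.
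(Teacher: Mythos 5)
You have the reduction right: the only implication with content is that a $\Delta_2^0$ 2IP degree computes a 1-generic, since FIP degrees are $n$IP (Dzhafarov--Mummert) and degrees computing a 1-generic are FIP (\cite{DDGT}), neither fact needing $\Delta_2^0$. Note, though, that this theorem appears in the paper only as a quoted result from \cite{DDGT}; the paper proves no version of it directly, and its own contribution (Theorem~\ref{main}, proved in Sections~\ref{fip1}--\ref{2ip}) is the strictly stronger statement with the $\Delta_2^0$ hypothesis removed. Your two-layer architecture --- structural sets $A_\sigma$ pairwise intersecting exactly when $\sigma,\tau$ are comparable, so that a maximal solution determines a unique path computable from it, plus requirement sets keyed to the $W_e$ --- is essentially the architecture of both the \cite{DDGT} argument and this paper's construction, so the skeleton is sound.

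The gap is in the step you assert most concretely: that maximality ``forces the path $X$ to decide $W_e$ along an initial segment,'' so that $G:=X$ is itself 1-generic. The mechanism you give --- if $X$ neither meets nor avoids $W_e$, a requirement set could be adjoined without destroying pairwise intersection, contradicting maximality --- only forces the solution to \emph{contain} that requirement set; it does nothing to make the path generic. The present paper's construction is organized around precisely this failure: a maximal solution whose path $Y=\Phi^f$ is not 1-generic must absorb the coding sets $B_{i,j}$ for $j\notin K$, and the payoff is not genericity of $Y$ but a computation of $K$ from $f$ via a secondary functional (Lemma~\ref{L:main}); no hypothesis on $f$ makes the path itself generic. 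Your final paragraph concedes this and redirects to building the generic from the $\Delta_2^0$ approximation $f_s$ rather than from the path, which is indeed the route taken in \cite{DDGT} (the paper describes that proof as a single uniform procedure applied to the approximation of the solution), but you then explicitly defer ``essentially all the work'' of engineering the requirement layer and the settling argument. So what you have is a plausible plan in which the one mechanism stated in detail is the one that fails, and its replacement is only named, not constructed. (For the $\Delta_2^0$ case one could instead simply invoke the paper's dichotomy --- either $\Phi^f$ is 1-generic or $f\geq_T K$, and $K$ computes a 1-generic --- but that is the paper's new theorem, not a reconstruction of the cited one.)
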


The goal of the present paper is to give a complete classification of
the FIP and $n$IP degrees.

\begin{thm} \label{main} A degree ${\bf a}$ is FIP iff it is 2IP iff
  it computes a 1-generic.
\end{thm}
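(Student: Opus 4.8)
Several of the equivalences in Theorem~\ref{main} are already available: every $1$-generic degree is FIP, by Diamondstone, Downey, Greenberg and Turetsky \cite{DDGT}, and every FIP degree is $2$IP, by Dzhafarov and Mummert (both recorded above). So the only implication left to prove is that every $2$IP degree computes a $1$-generic; together with the two known ones this closes the loop $2\mathrm{IP}\Rightarrow 1\text{-generic}\Rightarrow\mathrm{FIP}\Rightarrow 2\mathrm{IP}$. Since a degree $\mathbf a$ is $2$IP (respectively FIP) precisely when it computes a solution to every computable $2$IP (respectively FIP) instance, the plan is to produce a single uniformly computable sequence $\mathbb{X}=\langle X_i\rangle$ of computable reals, not all empty, all of whose FIP solutions compute a $1$-generic, and then a second such sequence all of whose $2$IP solutions compute a $1$-generic. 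The first sequence already yields the implication from FIP to computing a $1$-generic and is the cleaner object to describe; the second, combined with ``every FIP degree is $2$IP'', upgrades this to the full theorem. I would present the two in this order, obtaining the $2$IP instance by a pairwise-gadget translation of the FIP one.

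The backbone of the FIP instance is a tree gadget. Fix an effective enumeration $\langle W_e\rangle$ of the c.e.\ sets of strings and take $\N$ partitioned into infinitely many infinite, recognizably labelled columns; to each node $\sigma$ of a (possibly infinitely branching) computable tree I attach a set $X_\sigma$ built from column~$\sigma$, arranged so that $X_\sigma\cap X_\tau\neq\emptyset$ exactly when $\sigma$ and $\tau$ are comparable and, more generally, so that the node sets along any single branch have the finite intersection property. Then in any maximal FIP subfamily the node sets that occur are linearly ordered (two incomparable ones are disjoint, hence cannot coexist), and they cannot form a finite chain, since a finite chain can always be extended by a child's set with all finite intersections still nonempty. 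Thus a solution $f$ determines an infinite branch of the tree, together with the ``contents'' recorded along that branch, and both are computable from $f$ uniformly: to recover the branch to depth $n$, search $f$ for an index of a node set at depth $>n$ and read off its label, the chain condition guaranteeing coherence. The nodes of the tree will carry initial segments of the $1$-generic $G$ being built; what remains is to force $G$ to meet, for every $e$, the dense set $D_e=\{\tau:\tau\in W_e\ \text{or}\ (\forall\rho\succeq\tau)\ \rho\notin W_e\}$, i.e.\ to make $G$ a $1$-generic.

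For each $e$ I would install, at every node carrying a string $\sigma$, an $R_e$-module: a \emph{default} child committing to ``$\sigma$ forces $W_e$ out'', together with a child $c_\tau$ for every $\tau\succeq\sigma$ enumerated into $W_e$, committing to ``$G$ meets $W_e$ at $\tau$'' and carrying the longer string $\tau$. The intent is that a maximal FIP subfamily whose branch reaches this node is driven down the default when $W_e$ has no extension of $\sigma$ (so that $\sigma$ itself witnesses $D_e$) and is driven through some $c_\tau$ when $W_e$ does have such an extension (so that the branch meets $W_e$); iterating, every $R_e$ is decided along the branch and $G$ is total and $1$-generic, hence computable from $f$. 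The crux here --- and what I expect to be the main obstacle --- is engineering the modules so that maximality genuinely forces the branch in the correct direction \emph{using only a computable family}. The naive device of making the default subtree ``die off'' once $W_e$ acts runs into the basic mismatch that ``$W_e$ enumerates an extension of $\sigma$'' is $\Sigma^0_1$ whereas ``a fixed set becomes disjoint from $X_\sigma$'' cannot be triggered by a $\Sigma^0_1$ event; so the death of the default branch has to be arranged indirectly, via a priority-organised interplay of control sets whose \emph{compatibility} --- a $\Sigma^0_1$ feature --- can be switched on, with deadlock then forced inside the surviving branch by maximality. Making this interplay interlock across all $e$ while preserving the chain structure of the node sets is the real combinatorial labour, and one must in addition verify that a module which never acts corresponds exactly to the case where its node already decides $D_e$, so that no branch stalls and $G$ is total.

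For the $2$IP instance the available control is weaker: one may prescribe only which \emph{pairs} of sets meet, not which finite tuples have empty common intersection, so every multi-set constraint used above must be simulated by a cloud of auxiliary sets realising the same effect through pairwise incompatibilities alone. I would carry this out following the template by which Diamondstone, Downey, Greenberg and Turetsky pass from FIP to $2$IP in the $\Delta^0_2$ setting, checking that the branch-tracing property of Paragraph two and the firing behaviour of the $R_e$-modules of Paragraph three survive the translation. Granting this, every $2$IP solution of the new instance computes a $1$-generic, and with the two previously known implications Theorem~\ref{main} follows.
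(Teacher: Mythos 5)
There is a genuine gap at the heart of your plan: the $R_e$-modules cannot work, and the paper does something structurally different. Your tree backbone (node sets pairwise intersecting iff their nodes are comparable, so that any maximal FIP subfamily singles out an infinite branch, uniformly computable from the solution) matches the paper's $A_\sigma$ labels exactly. But your strategy for genericity is to arrange the family so that maximality \emph{redirects} the branch toward meeting or avoiding each $W_e$, and this is impossible in principle, not merely hard. Once a common element has been placed into the sets along the ``default'' child, the sets along any branch through that child form an FIP family forever; by Zorn's lemma that family extends to \emph{some} maximal FIP family, and that maximal family's branch is still the default (non-generic) one. Maximality can only force \emph{additional} sets into a solution --- it can never evict the $A_\sigma$'s that already determine a bad branch. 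This is exactly the $\Sigma^0_1$/monotonicity mismatch you flag yourself, and the ``priority-organised interplay of control sets \dots\ with deadlock forced inside the surviving branch'' does not resolve it: no arrangement of a computable family can make every maximal FIP subfamily trace a 1-generic branch. (The paper explicitly notes that no uniform reduction from the solution to a 1-generic is possible, which is what your scheme would produce.)

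The missing idea is a nonuniform fallback through $\emptyset'$. In the paper, if the branch $Y=\Phi^f$ fails to be 1-generic --- say it neither meets nor avoids $W_i$ --- then the coding labels $B_{i,j}$ (placed on every node $W_i$ enumerates, for every $j\notin K_s$) occur cofinally along $Y$ without lying on $Y$; so for $j\notin K$ the set $B_{i,j}$ is consistent with $\widetilde Y$ and maximality forces it into the solution, while for $j\in K$ it occurs only finitely often and is excluded. Hence $\{j: B_{i,j}\in\widetilde f\}=\overline K$, and a secondary functional $\Psi_i$ reads off $K\equiv_T\emptyset'$, which computes a 1-generic. The solution thus computes a 1-generic either via the branch or via $K$, with infinitely many candidate functionals of which at least one succeeds. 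Your 2IP step is also not how the paper proceeds (it re-runs the construction with a priority scheme on the $B$ labels so that, for the particular family built, any finite 2-intersecting set lies on a common finite path, making 2IP solutions literally FIP solutions), but that is secondary: without the $K$-coding fallback the FIP core of your argument cannot be completed.
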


In view of results in \cite{DDGT}, it is enough to prove that if
${\bf a}$ is an FIP (2IP) degree then it computes a 1-generic.  We
remark that Theorem \ref{main} gives the first example of a classical
theorem whose strength aligns exactly with the ability to construct a
1-generic.

In the proof, we will first establish the FIP case.  Later we will
modify this proof to establish the 2IP case. We remark that the proofs
from \cite{DDGT} heavily used approximations and were uniform in the
sense that a single procedure $\Phi$ was constructed which took the
given $\Delta_2$ solution $S$ to the 1-generic $G$. Preliminary
analyses show that no such uniform solution is possible for the global
case.  Our proof is nonuniform, and constructs infinitely many
possible functionals and we will argue that at least one works.

In the final section, we will discuss the proof theory of the
situation.  We will do this in the context of reverse mathematics in
the sense of Simpson \cite{Si}.  A general reference for computability
is Soare \cite{So}.

\section{The FIP case}\label{fip1}
In this section, we construct an $\mathbb{X}=\langle X_i\rangle$ such
that any FIP $f$ can compute a 1-generic. We will use one primary
Turing functional $\Phi$ to compute a 1-generic from $f$, and a
countable collection of secondary functionals $\Psi_i$ to compute the
halting set, $K$, from $f$. The Turing degree of $K$ is $\emptyset'$,
which can be used to compute a 1-generic. The outputs of the
functionals will depend only on the range of $f$. (Intuitively, this
means that the 1-generic will depend only on the FIP set, not on the
FIP sequence.) We will guarantee that for any FIP $f$, at least one of
these functionals succeeds at its task.

We produce a collection of labels that we will place onto the tree
$2^{<\omega}$. Each $\sigma$ in $2^{<\omega}$ will have either one
label, or an infinite countable collection of labels on it. Each label
will be a name for a computable real, and the sequence
$\langle X_i\rangle$ will be the sequence of labels that are used in
our construction, in the order that they first appear. The
construction will ensure that for any finite or infinite path through
$2^{<\omega}$, the sequence of reals whose labels appear on the path
has the finite intersection property (but is, perhaps, not maximal
with respect to having the finite intersection property).


The labels that we will use are as follows.

We will have labels of the form $A_{\sigma}$, thought of as structural
labels.  For each $\sigma\in 2^{<\omega}$, the label $A_\sigma$ will
be placed on $\sigma$ and in no other location. The purpose of these
labels will be to help tie $f$ to a path through $2^{<\omega}$. These
labels will be what $\Phi$ uses to attempt to compute a 1-generic.

We will also have labels of the form $B_{i,j}$, thought of as coding
labels. These labels will be placed on $2^{<\omega}$ in a $\Sigma^0_1$
way, although for simplicity of the construction, we will assume we
are able to place a countable computable set of $B$ labels on one node
during a single stage of the construction. They will be the labels
that the $\Psi_i$ use to attempt to compute $K$.

These will be the only labels that we place.

At each stage, $s$, of the construction, for each
$\sigma\in2^{<\omega}$ of length $<s$, we choose some new $n\in \N$,
and put $n$ into every set whose label is on an initial segment of
$\sigma$ at stage $s$. That number $n$ will not go into any other
sets. Thus, labeling $2^{<\omega}$ will be equivalent to defining (and
producing computations of) all of the sets $X_i$.

\vspace{5pt}

Throughout the construction, we will frequently wish to alternate
between objects that we construct and the sets of reals whose labels
appear on those objects. To this end, we make a collection of
notational definitions.

First, the labels will also be constant symbols for the reals that
they label. So, for instance, we begin the construction by placing the
label $A_\lambda$ onto the empty node $\lambda$. Since this is the
first label that we use, that means the label of $X_0$ will be
$A_\lambda$. This convention will allow us to treat $A_\lambda$ as a
symbol for a real (and in particular, have that $X_0=A_\lambda$).

If $f:\N\rightarrow\N$ is a function, then
$\widetilde f=\{X_i:i\in\ran(f)\}$.

A \emph{node} in $2^{<\omega}$ is an element of $2^{<\omega}$.

A \emph{finite path} in $2^{<\omega}$ is a sequence of successive
nodes in $2^{<\omega}$. If the finite path begins at the root of
$2^{<\omega}$, then it is the set of $\tau\preceq\sigma$ for some
$\sigma\in 2^{<\omega}$. This finite path will also sometimes be
denoted $\sigma$.

If $P\in 2^\omega$ is a real, regarded as an infinite path through
$2^{<\omega}$, then $\widetilde P$ is the set of reals whose labels
appear on initial segments of $P$.

If $\sigma\in 2^{<\omega}$, then $\widetilde\sigma$ is the set of
reals whose labels appear on proper initial segments of $\sigma$. This
set will also be referred to as the sequence of reals along the finite
path $\sigma$.

All of the functionals that we define in the construction will depend
only on the range of their oracle, $f$. Thus, the following definition
will be well defined. If $\theta$ is a Turing functional that we
define during this construction (so $\theta$ is either $\Phi$ or one
of the $\Psi_i$), and if $S$ is an infinite subset of the reals that
we construct during the construction, then $\theta^S=\theta^f$ where
$f$ is any function such that $\widetilde f=S$.

If $\sigma\in 2^{<\omega}$, and if $\theta$ is a functional that we
define during the construction, then $\theta^\sigma$ is the maximal
finite string $\alpha$ such that for any $S\supseteq\widetilde\sigma$,
$\theta^S\succ\alpha$. In other words, $\theta^\sigma$ is the initial
segment of the output of $\theta$ that is determined by $\sigma$. In
the construction, our functionals will be defined such that for any
$\sigma\in T$, $\theta^\sigma$ will always be defined.

\vspace{5pt}

Now that we have established this notation, note the following lemmas
that can be proved entirely from the above heuristics, without appeal
to the specific manner in which we decide how place our $B$ labels
onto $2^{<\omega}$.

\begin{lem}\label{L:finitepaths}

  Given any finite set, $S$, of reals from among the reals that we
  build, those reals have the finite intersection property if and only
  if there is a node $\sigma\in 2^{<\omega}$ such that
  $\widetilde\sigma\supseteq S$.

\end{lem}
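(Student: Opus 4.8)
The plan is to establish both directions of the biconditional directly from the way numbers are placed into the sets during the construction. Recall that at each stage $s$, for each $\sigma\in 2^{<\omega}$ of length $<s$, a fresh number $n$ is placed into exactly those sets whose labels sit on an initial segment of $\sigma$; crucially, $n$ enters no other set. So each number witnesses membership in precisely a ``downward-closed chain'' of labels, namely the labels appearing along a single finite path from the root.

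For the backward direction, suppose $\widetilde\sigma\supseteq S$ for some node $\sigma\in 2^{<\omega}$. Once the construction has reached a stage $s>|\sigma|$, it will at that stage (and every later stage) choose a fresh $n$ and put it into every set whose label lies on an initial segment of $\sigma$ — in particular into every real in $\widetilde\sigma$, hence into every real in $S$. Thus $n\in\bigcap S$, so $S$ has the finite intersection property. (Here I would note that ``finite intersection property'' for a finite set $S$ just means $\bigcap S\neq\emptyset$, and that the argument in fact shows $\bigcap S$ is infinite.)

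For the forward direction, suppose the finite set $S$ has the finite intersection property, so there is some $n\in\bigcap S$. This $n$ was placed into the sets during some single stage $s$, and by construction it was placed into exactly the sets whose labels lie on initial segments of some single fixed $\sigma$ with $|\sigma|<s$. Hence every real in $S$ has its label on an initial segment of $\sigma$; extending $\sigma$ by one more bit to a node $\sigma'$ (so that the labels on \emph{proper} initial segments of $\sigma'$ include all labels on initial segments of $\sigma$) gives a node with $\widetilde{\sigma'}\supseteq S$, as required. One small point to verify is that each label names a well-defined real and that distinct occurrences of the ``same'' real-name on different nodes do not cause ambiguity — but by the labeling conventions already fixed, each $A_\sigma$ appears only on $\sigma$, and the $B_{i,j}$ are names too, so the correspondence between ``label on an initial segment of $\sigma$'' and ``real in $\widetilde\sigma$'' is exact.

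The only mild subtlety — and the step I would be most careful about — is the bookkeeping with \emph{proper} versus non-proper initial segments, since $\widetilde\sigma$ is defined using proper initial segments of $\sigma$ while the number $n$ gets placed along \emph{all} initial segments (including $\sigma$ itself); this is handled simply by passing to a one-bit extension as above. Everything else is immediate from the construction's defining property that each fresh number tracks exactly one root-to-node path.
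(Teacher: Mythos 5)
Your proof is correct and follows essentially the same route as the paper's: both directions are read off directly from the fact that each fresh number enters exactly the sets whose labels lie along a single root-to-node path. The only adjustments worth noting are that your one-bit-extension fix for the proper-initial-segment bookkeeping is a legitimate refinement the paper glosses over, while in the backward direction you should take the stage $s$ large enough that all labels of reals in $S$ have actually \emph{appeared} on initial segments of $\sigma$ (labels are placed over time), not merely $s>|\sigma|$.
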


\begin{proof}

  S is finite, and so, if there is such a $\sigma\in 2^{<\omega}$,
  then at some finite stage of the construction, the label of every
  real in $S$ has appears on an initial segment of $\sigma$. At that
  stage (and every stage thereafter), a common element is placed into
  all the reals in $\widetilde\sigma\supseteq S$.

  Conversely, if $S$ has the finite intersection property, then in
  particular, the reals in $S$ must have a common intersection
  (because $S$ is a finite set, and so a finite subset of
  itself). Thus, there must be some $n$ that is in all of the
  reals. That $n$ was chosen at some finite stage to go exactly into
  the reals along some $\sigma\in2^{<\omega}$. Thus, all the reals in
  our set must be in $\widetilde\sigma$.
\end{proof}

\begin{lem}

  Given any infinite path, $P$ through $2^{<\omega}$, $\widetilde P$
  has the finite intersection property.

\end{lem}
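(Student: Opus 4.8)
The plan is to reduce this immediately to Lemma \ref{L:finitepaths}. By definition, $\widetilde P$ has the finite intersection property exactly when every finite subset of it has nonempty intersection, so it suffices to fix an arbitrary finite $S \subseteq \widetilde P$ and show that $S$ has nonempty intersection.

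So fix such an $S$. By the definition of $\widetilde P$, each real in $S$ has a label that appears on some initial segment of $P$. Since $S$ is finite and $P$ is infinite, we may choose a single $\sigma \prec P$ long enough that, for every real in $S$, its label appears on a proper initial segment of $\sigma$; that is, $\widetilde\sigma \supseteq S$. Now Lemma \ref{L:finitepaths} applies verbatim: since there is a node $\sigma \in 2^{<\omega}$ with $\widetilde\sigma \supseteq S$, the reals in $S$ have the finite intersection property, and in particular (being themselves a finite subset of $S$) they have nonempty intersection. Since $S$ was an arbitrary finite subset of $\widetilde P$, the set $\widetilde P$ has the finite intersection property.

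There is no genuine obstacle here; the statement is essentially a corollary of Lemma \ref{L:finitepaths}. The only point requiring a small amount of care is that, because $S$ is finite, one can absorb all of its labels simultaneously into the initial segments of a \emph{single} $\sigma \prec P$ — this is where finiteness of $S$ and infinitude of $P$ are both used. If one prefers to avoid citing the lemma, the same $\sigma$ works directly from the construction: past the finite stage by which all labels of reals in $S$ have appeared on proper initial segments of $\sigma$, every subsequent stage places a common new number into all reals along $\sigma$, hence into all reals in $S$, and any such number witnesses the nonempty intersection.
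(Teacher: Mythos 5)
Your proof is correct and follows the same route as the paper: reduce to a finite subset $S\subseteq\widetilde P$, find a single $\sigma\prec P$ with $\widetilde\sigma\supseteq S$, and invoke Lemma \ref{L:finitepaths}. The paper's version is just a terser statement of the identical argument.
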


Note that this lemma does not claim that $\widetilde P$ is FIP with
respect to the sequence of all reals that we build.

\begin{proof}

  Given any finite number of reals in $\widetilde P$, there is a
  finite path that they are all on. By Lemma \ref{L:finitepaths},
  those reals have the finite intersection property, and so they
  intersect.
\end{proof}










\begin{lem}\label{L:containsapath}

  Let $S$ be an FIP set of reals for the sequence $\mathbb{X}$ that we
  build. Then there exists a unique infinite path $Y$ through
  $2^{<\omega}$ such that $\widetilde Y\subseteq S$.
\end{lem}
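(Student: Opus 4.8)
The plan is to prove existence and uniqueness separately, using Lemma~\ref{L:finitepaths} as the main tool.

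For \textbf{existence}, I would build the path $Y$ by induction, maintaining the invariant that at each stage I have a finite string $\sigma_n \in 2^{<\omega}$ with $\widetilde{\sigma_n}\subseteq S$. Start with $\sigma_0=\lambda$ (note $\widetilde\lambda=\emptyset\subseteq S$, since $A_\lambda$ sits on $\lambda$ itself, not on a proper initial segment). Given $\sigma_n$, I want to extend to $\sigma_n 0$ or $\sigma_n 1$ while keeping the sequence of reals along the path inside $S$. The key point is that extending from $\sigma_n$ to $\sigma_n i$ adds exactly one $A$-label, namely $A_{\sigma_n}$, together with whatever $B$-labels were placed on $\sigma_n$; so the question is whether $S$ contains the reals labeled on $\sigma_n$. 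Since $S$ is FIP (in particular has the finite intersection property) and $\widetilde{\sigma_n}\subseteq S$, I would argue that $S$ must contain $A_{\sigma_n}$ and all the $B$-labels on $\sigma_n$: if not, one could adjoin those missing reals to $S$ — they still all lie along the finite path $\sigma_n$, hence by Lemma~\ref{L:finitepaths} every finite subset of the enlarged collection has the finite intersection property, and the enlarged family is genuinely larger, contradicting maximality of $S$. Wait — maximality only forbids enlarging $S$ to a \emph{larger} FIP family; I need the enlarged family to still be FIP (have the finite intersection property), and Lemma~\ref{L:finitepaths} only controls finite subfamilies that lie along a single path. So the cleaner argument: let $S' = S \cup \widetilde{\sigma_n i}$ for the appropriate choice of $i$; I must show \emph{some} such $i$ makes $S'$ have the finite intersection property, equivalently (via Lemma~\ref{L:finitepaths}) every finite $F\subseteq S'$ lies along some node. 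A finite $F\subseteq S'$ splits as $F_0\cup F_1$ with $F_0\subseteq S$ and $F_1\subseteq\widetilde{\sigma_n i}$; since $F_1$ lies along $\sigma_n i$ and $S$ is FIP with $F_0\cup\widetilde{\sigma_n}$ also being... hmm, this needs care. The honest route: use that $S$ FIP means $S$ itself has the finite intersection property, so by Lemma~\ref{L:finitepaths} for each finite $F_0\subseteq S$ there is a node containing $F_0$; but I need a \emph{single coherent} path. I think the right statement to extract first is:

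\begin{claim*}
If $S$ has the finite intersection property and $\widetilde\sigma\subseteq S$, then $S\cup\widetilde{\sigma i}\subseteq$ has the finite intersection property for $i=0$ or $i=1$, \emph{provided} $S$ is maximal.
\end{claim*}

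Actually the cleanest approach is: since $S$ is FIP and $\sigma\in 2^{<\omega}$, and since adjoining $\widetilde{\sigma i}$ to $S$ for the \emph{correct} $i$ keeps it FIP — if neither worked, then both $A_{\sigma}\cup(B\text{-labels on }\sigma)$ together with some finite $F_0\subseteq S$ fail to sit on a common node, but $F_0$ already sits on some node $\tau$, and by FIP maximality of $S$ we must have... I would structure it as: by maximality, $A_\sigma\in S$ already (else $S\cup\{A_\sigma\}$: any finite subset is $F_0\cup\{A_\sigma\}$, and $F_0$ lies along some $\tau$; since $A_\sigma$ and $F_0$ both... no). The genuinely clean fact is that $S$ maximal FIP plus Lemma~\ref{L:finitepaths} forces $S$ to be upward-closed in a suitable sense. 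Let me just say: I would show that for the unique $i$ such that $\widetilde{\sigma_n i}\subseteq S$ (uniqueness of such $i$ being part of the argument — both $0$ and $1$ can't work simultaneously once we go deep enough, by FIP), the induction continues; and that \emph{at least one} $i$ works follows because otherwise $S$ would be contained in an FIP set obtained by adding a path-compatible real, contradicting maximality. This interplay — showing exactly one of the two children keeps us inside $S$ — is \textbf{the main obstacle}, and it is exactly where maximality (not just having the finite intersection property) is essential; the set $\{\{1,2\},\{2,3\},\{3,1\}\}$-type examples show FIP-without-maximality would fail.

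For \textbf{uniqueness}, suppose $Y\neq Y'$ are two infinite paths with $\widetilde Y,\widetilde{Y'}\subseteq S$. They split at some node: there is $\sigma$ with $\sigma 0\preceq Y$ and $\sigma 1\preceq Y'$ (say). Then $A_{\sigma 0}\in\widetilde Y\subseteq S$ and $A_{\sigma 1}\in\widetilde{Y'}\subseteq S$, so $\{A_{\sigma 0},A_{\sigma 1}\}\subseteq S$. But $S$ has the finite intersection property, so by Lemma~\ref{L:finitepaths} there is a node $\tau$ with $\widetilde\tau\supseteq\{A_{\sigma 0},A_{\sigma 1}\}$, i.e. both $A_{\sigma 0}$ and $A_{\sigma 1}$ appear on \emph{proper} initial segments of $\tau$. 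Since $A_{\sigma 0}$ appears only on $\sigma 0$ and $A_{\sigma 1}$ appears only on $\sigma 1$, we would need both $\sigma 0\prec\tau$ and $\sigma 1\prec\tau$, which is impossible in $2^{<\omega}$ as $\sigma 0$ and $\sigma 1$ are incomparable. This contradiction gives uniqueness. I expect this direction to be short and clean; the existence direction carries all the weight, and I would likely factor out a small lemma saying "a maximal FIP $S$ with $\widetilde\sigma\subseteq S$ contains $\widetilde{\sigma i}$ for exactly one $i$" before assembling the path.
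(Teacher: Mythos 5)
Your uniqueness argument is correct and is essentially the paper's (incomparable nodes carry $A$-labels of disjoint reals, so $S$ cannot contain both, whence the subscripts of the $A$-labels in $S$ form a chain). The existence half, however, has a genuine gap, and you flag it yourself: you never actually prove that at least one child of the current node ``stays inside $S$.'' Your justification --- ``otherwise $S$ would be contained in an FIP set obtained by adding a path-compatible real, contradicting maximality'' --- is circular: maximality only tells you that $S\cup\{X\}$ fails to have the finite intersection property for every $X\notin S$; it does not by itself tell you that one of the two candidates $A_{\sigma^\smallfrown 0}$, $A_{\sigma^\smallfrown 1}$ can be added. The missing idea is the paper's witness-combination argument: supposing $\sigma_0$ is maximal with $A_{\sigma_0}\in S$, take finite sets $F_1,F_2\subseteq S$ witnessing that $S\cup\{A_{{\sigma_0}^\smallfrown 0}\}$ and $S\cup\{A_{{\sigma_0}^\smallfrown 1}\}$ each fail to have the finite intersection property; then $F=F_1\cup F_2\cup\{A_{\sigma_0}\}$ is a finite subset of $S$, hence by Lemma \ref{L:finitepaths} lies along a single node $\sigma$ properly extending $\sigma_0$; but $\sigma$ must then extend one of the two children, and $F$ together with that child's $A$-label lies along a further extension of $\sigma$, so by Lemma \ref{L:finitepaths} it has nonempty intersection, contradicting the choice of the witness. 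This is the one nontrivial step of the lemma and it is absent from your proposal.

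A secondary problem: your inductive invariant $\widetilde{\sigma_n}\subseteq S$ cannot drive the induction, because $\widetilde{{\sigma_n}^\smallfrown 0}=\widetilde{{\sigma_n}^\smallfrown 1}$ (both equal $\widetilde{\sigma_n}$ together with the reals labeled on $\sigma_n$ itself, since $\widetilde\tau$ records only labels on \emph{proper} initial segments of $\tau$). So there is never a ``unique $i$ such that $\widetilde{\sigma_n i}\subseteq S$''; if the invariant passes to one child it passes to both, and your induction, if it ran at all, would show $\widetilde\sigma\subseteq S$ for every $\sigma$, which is absurd since $A_{\sigma 0}$ and $A_{\sigma 1}$ never intersect. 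The invariant must be phrased one level down: track the nodes $\sigma$ with $A_\sigma\in S$, show (as the paper does) that they form a chain, that the chain is infinite by the witness argument above, and that $B$-labels along the chain are absorbed into $S$ by maximality; then $Y$ is the union of the chain.
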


\begin{proof}
  First, we prove existence.

  Note first that if $\sigma$ and $\tau$ are incomparable in
  $2^{<\omega}$, then $A_\sigma$ and $A_\tau$ do not intersect,
  because those labels only appear on $\sigma$ and $\tau$
  respectively. Thus, for all $A$ sets that are in $S$, the subscripts
  of the labels must be comparable.

  Secondly, if $A_\sigma\in S$, and if $B_{i,j}\in\widetilde\sigma$,
  then $B_{i,j}\in S$. This is because any finite path containing
  $A_\sigma$ also contains $B_{i,j}$, and so by Lemma
  \ref{L:finitepaths}, if $S$ has the finite intersection property,
  and $A_\sigma\in S$, then $S\cup \{B_{i,j}\}$ has the finite
  intersection property. By maximality of $S$, we may conclude that
  $S=S\cup \{B_{i,j}\}$

  It remains to show that there exist infinitely many $\sigma$ such
  that $A_\sigma\in S$, because then $Y$ can be the union of those
  $\sigma$.

  \vspace{5pt}


  Assume this is not the case, and let $\sigma_0$ be maximal such that
  $A_{\sigma_0}\in S$. Let $\sigma_1={\sigma_0}^\smallfrown 0$, the
  left branch off of $\sigma_0$, and let
  $\sigma_2={\sigma_0}^\smallfrown 1$, the right branch off of
  $\sigma_0$. Because $S$ is maximal, and by choice of $\sigma_0$, we
  have that $S\cup \{A_{\sigma_1}\}$ does not have the finite
  intersection property. So fix a finite set $F_1\subseteq S$ such
  that $F_1\cup\{A_{\sigma_1}\}$ has an empty intersection. Likewise,
  fix a finite set $F_2\subseteq S$ such that
  $F_2\cup\{A_{\sigma_2}\}$ has an empty intersection.

  Let $F=F_1\cup F_2\cup\{A_{\sigma_0}\}$. Then $F$ is a finite subset
  of $S$, and so it has the finite intersection property, so by Lemma
  \ref{L:finitepaths}, there is some $\sigma$ such that
  $F\subseteq \widetilde\sigma$. By construction, $A_{\sigma_0}\in F$,
  and so $\sigma_0\preceq\sigma$. For $i=1,2$, $F\cup\{A_{\sigma_i}\}$
  does not have the finite intersection property, so by Lemma
  \ref{L:finitepaths}, $\sigma$ is incomparable to both $\sigma_1$ and
  $\sigma_2$. This is a contradiction, because every extension of
  $\sigma_0$ is comparable to either $\sigma_1$ or $\sigma_2$.
\end{proof}

We define $\Phi^S$ to be the $Y$ satisfying Lemma
\ref{L:containsapath}. Note that this is uniformly computable from any
$f$ such that $\widetilde f=S$ because, to compute the first $n$ bits
of $Y$, we need only wait to find some $\sigma$ of length $n$ such
that $A_\sigma$ is in $\widetilde f$.


\section{Construction}\label{fip2}

\noindent We now describe the way that the $B$ labels get placed onto
$2^{<\omega}$, and the way that our $\Psi$ functionals use them to
compute $K$ in the event that $\Phi^f$ is not 1-generic.

To make our notation simpler, we will assume that the c.e. sets $W_i$
always enumerate strings in $2^{<\omega}$. Under this convention, a
real $Y$ is 1-generic if and only if, for every $i$, either $W_i$
enumerates an initial segment of $Y$, or there is an initial segment
of $Y$ such that $W_i$ never enumerates any extensions of that initial
segment.

The construction is as follows. At stage $s$ of the construction, let
$K_s$ be the $s$th approximation to the halting set $K$. Then, for
each $i<s$, if $W_i$ enumerates $\sigma$ in $\leq s$ steps, and if
$W_i$ has not enumerated an initial segment of $\sigma$ at a previous
stage, then for every $j$ such that $j\notin K_s$, we place the label
$B_{i,j}$ onto the node $\sigma$. This completes the construction.

We now define $\Psi_i$ so that $\Psi_i^S(j)=1$ if $j\in K$, and
$\Psi_i^S(j)=0$ if $B_{i,j}\in S$.

More formally, to determine $\Psi_i^f(j)$, search for some value of
$s$ such that one of the following holds.

\begin{enumerate}
\item $j\in K_s$
\item There is some $t<s$ such that the labels for $X_0,\dots,X_t$
  have been determined by stage $s$ of the construction and

  $ (\exists r<s) (\exists q\leq t) \big(f(r)=q\ \ \ \& \ \ \text{the
    label of $X_q$ is $B_{i,j}$}\big).$

\end{enumerate}

If an $s$ satisfying (1) is found first, then halt and output 1. If an $s$ satisfying (2) is found first, then halt and output 0. (The primary reason for the formalism here is that the informal definition is not a definition, because both of the two ``if'' clauses could be true. In the conditions where $\Psi_i$ is supposed to work, exactly one of the two ``if'' clauses will be true, and the informal definition will be all we need.)\\

This completes the labeling of $2^{<\omega}$, and thus of $\mathbb X$,
and also $\Psi_i$ for each $i$. We now move on to prove that the
construction functions as desired.

\begin{lem}\label{L:main}

  Let $f$ be FIP for $\mathbb{X}$. Let $Y=\Phi^{f}$ and assume $Y$ is
  not 1-generic. Then there exists some $i$ such that $\Psi_{i}^f=K$.

\end{lem}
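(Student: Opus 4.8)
The plan is to read the good index off the \emph{structure} of the maximal FIP set $S:=\widetilde f$, rather than from the witness to non-$1$-genericity that $Y=\Phi^f$ being non-generic hands us. By Lemma~\ref{L:containsapath} we have $\widetilde Y\subseteq S$, and since an $A$-label off the path $Y$ is disjoint from $A_{Y\restriction n}\in\widetilde Y\subseteq S$ for a suitable $n$, every label in $S\setminus\widetilde Y$ must be a $B$-label that sits on no initial segment of $Y$; call such a label \emph{stray}. The heart of the argument is that $S$ must actually contain a stray label, and that the index $k$ appearing in such a stray label $B_{k,l}$ is the one for which $\Psi_k^f=K$.

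First I would record two facts. (i) If $B_{k,l}$ is stray then $l\notin K$: otherwise $B_{k,l}$ sits on only the finitely many ``first strings'' of $W_k$ enumerated before $l$ enters $K$, none of which lies on $Y$, and testing $\{A_{Y\restriction 0},\dots,A_{Y\restriction m}\}\cup\{B_{k,l}\}$ for a common element with $m$ larger than the length of every such node forces one of those nodes onto $Y$ by Lemma~\ref{L:finitepaths}, a contradiction. (ii) Hence $B_{k,l}$ sits on \emph{every} first string of $W_k$, none of which lies on $Y$, so $W_k$ enumerates no initial segment of $Y$.

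Next comes the main claim: if $i$ is any index such that $W_i$ enumerates no initial segment of $Y$ and $B_{i,l}\in S$ for some $l\notin K$, then $\Psi_i^f=K$. For $j\notin K$, the label $B_{i,j}$ sits on exactly the same nodes as $B_{i,l}$ (all the first strings of $W_i$), so any common element of a finite $F\subseteq S$ with $B_{i,l}$ is also one with $B_{i,j}$; since $B_{i,l}\in S$ and $S$ is FIP this holds for every such $F$, so $S\cup\{B_{i,j}\}$ is FIP and maximality gives $B_{i,j}\in S$, whence clause~(2) in the definition of $\Psi_i$ eventually fires and $\Psi_i^f(j)=0$. For $j\in K$, $B_{i,j}$ sits on only finitely many nodes, none on $Y$, so the ``large $m$'' argument of (i) gives $B_{i,j}\notin S$; clause~(2) never fires and $\Psi_i^f(j)=1$ via clause~(1). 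Thus $\Psi_i^f=K$.

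It remains to produce such an $i$. Since $Y=\Phi^f$ is not $1$-generic, fix $e$ so that $W_e$ enumerates a proper extension of every initial segment of $Y$ but no initial segment of $Y$; a short argument then shows the first strings of $W_e$ are dense above $Y$. For any $l\notin K$, $B_{e,l}$ sits on all of them, and routing any finite $F\subseteq\widetilde Y$ through such a first string lying above the support of $F$ shows $\widetilde Y\cup\{B_{e,l}\}$ still has the finite intersection property, while $B_{e,l}\notin\widetilde Y$. Hence $\widetilde Y$ is not maximal, so $S\supsetneq\widetilde Y$ and $S$ contains a stray label $B_{k,l'}$; by (i) and (ii), $l'\notin K$ and $W_k$ enumerates no initial segment of $Y$, so $i=k$ satisfies the hypotheses of the main claim. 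I expect the main obstacle to be exactly this last step: the witness $e$ we are given need not satisfy $B_{e,l}\in S$ (a different stray family present in $S$ can be incompatible with the first strings of $W_e$), so the usable index must be extracted from a stray label that $S$ genuinely contains — which is why proving the non-maximality of $\widetilde Y$ is the crux. The remaining finite-intersection bookkeeping is routine given Lemma~\ref{L:finitepaths} and the formal definition of $\Psi_i$.
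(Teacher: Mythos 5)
Your proposal is correct and follows essentially the same route as the paper: use the non-genericity witness only to show $\widetilde Y\subsetneq\widetilde f$, then extract the working index from a stray $B$-label actually present in $\widetilde f\setminus\widetilde Y$ and verify $\{j:B_{i_0,j}\in\widetilde f\}=\overline{K}$. The only cosmetic difference is that you re-derive the content of Lemma~\ref{L:cofinal} inline (via Lemma~\ref{L:finitepaths} and the ``large $m$'' trick) rather than citing it.
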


To explain how this will be proved, note the following lemma which
shows the only mechanism by which reals that are not in
$\widetilde{Y}$ might still be able to be in $\widetilde{f}$.

\begin{lem}\label{L:cofinal}

  Let $Y$ be a path through $2^{<\omega}$. Let $B_{i,j}$ be a $B$
  label that is not in $\widetilde{Y}$. Then
  $\widetilde{Y}\cup\{B_{i,j}\}$ has the finite intersection property
  if and only for every initial segment $\sigma$ of $Y$, there is an
  extension $\tau\succeq\sigma$ such that $B_{i,j}\in\widetilde\tau$.

\end{lem}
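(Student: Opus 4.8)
The plan is to deduce both implications from Lemma~\ref{L:finitepaths}, the characterization of when a finite collection of our reals has the finite intersection property, together with the elementary observation that $\sigma\mapsto\widetilde\sigma$ is monotone under $\preceq$: every proper initial segment of $\sigma$ is a proper initial segment of any $\tau\succeq\sigma$, so $\sigma\preceq\tau$ implies $\widetilde\sigma\subseteq\widetilde\tau$. Note also that nothing here uses any special property of $Y$; the proof works for an arbitrary path.

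First I would prove the forward direction. Assume $\widetilde Y\cup\{B_{i,j}\}$ has the finite intersection property and fix an initial segment $\sigma$ of $Y$. Then $A_\sigma\in\widetilde Y$, so $\{A_\sigma,B_{i,j}\}$ is a finite subset of $\widetilde Y\cup\{B_{i,j}\}$ and therefore has nonempty intersection; by Lemma~\ref{L:finitepaths} there is a node $\tau$ with $\{A_\sigma,B_{i,j}\}\subseteq\widetilde\tau$. Since $A_\sigma$ labels only the node $\sigma$, having $A_\sigma\in\widetilde\tau$ forces $\sigma\prec\tau$, and $B_{i,j}\in\widetilde\tau$ is exactly the asserted extension property. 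One must mind the proper/non-proper-initial-segment convention in the definition of $\widetilde{\,\cdot\,}$ here; this costs at most a one-node shift, handled by passing to the successor of $\sigma$ along $Y$ where needed.

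For the converse, assume that for every initial segment $\sigma$ of $Y$ there is $\tau\succeq\sigma$ with $B_{i,j}\in\widetilde\tau$, and let $F\subseteq\widetilde Y\cup\{B_{i,j}\}$ be finite; I must show $\bigcap F\neq\emptyset$. If $B_{i,j}\notin F$ then $F\subseteq\widetilde Y$ and we are done by the earlier lemma that $\widetilde Y$ has the finite intersection property for any infinite path $Y$. Otherwise write $F=F_0\cup\{B_{i,j}\}$ with $F_0\subseteq\widetilde Y$ finite; since $F_0$ is finite, all of its labels appear on a single sufficiently long initial segment $\sigma$ of $Y$, so $F_0\subseteq\widetilde\sigma$. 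Choosing $\tau\succeq\sigma$ with $B_{i,j}\in\widetilde\tau$ and using monotonicity gives $F=F_0\cup\{B_{i,j}\}\subseteq\widetilde\sigma\cup\{B_{i,j}\}\subseteq\widetilde\tau$, whence $F$ has the finite intersection property by Lemma~\ref{L:finitepaths}, in particular $\bigcap F\neq\emptyset$.

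The argument is essentially bookkeeping, and I do not expect a serious obstacle: the only point requiring attention is keeping the proper-versus-improper-initial-segment conventions straight (equivalently, correctly reading off from the construction, given a common element $n$ of a finite family, the unique node whose associated reals are exactly those containing $n$), and that is already encapsulated in Lemma~\ref{L:finitepaths}, so both directions reduce to invoking it.
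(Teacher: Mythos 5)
Your proposal is correct and follows essentially the same route as the paper's own proof: both directions are reduced to Lemma~\ref{L:finitepaths}, using $A_\sigma$ together with $B_{i,j}$ for the forward direction and the decomposition $F=F_0\cup\{B_{i,j}\}$ with $F_0\subseteq\widetilde\sigma$ for some $\sigma\prec Y$ for the converse. The only cosmetic difference is that you split off the case $B_{i,j}\notin F$ explicitly, which the paper absorbs by setting $F_0=F\cap\widetilde Y$.
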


\begin{proof}

  Let $Y$ be an infinite path through $2^{<\omega}$. Let $B_{i,j}$ be
  a $B$ label that is not in $\widetilde{Y}$.

  Assume that $\widetilde{Y}\cup\{B_{i,j}\}$ has the finite
  intersection property, and let $\sigma\prec Y$. Then
  $A_\sigma\in\widetilde{Y}$ and so it must intersect $B_{i,j}$. By
  Lemma \ref{L:finitepaths}, there must be some $\tau$ such that
  $\{A_\sigma,B_{i,j}\}\subseteq\widetilde\tau$. Because
  $A_\sigma\in\widetilde\tau$, we have that $\tau\succeq\sigma$.

  Conversely, assume that for every $\sigma\prec Y$, there exists a
  $\tau\succ\sigma$ such that $B_{i,j}\in\widetilde\tau$. We must
  prove that $\widetilde{Y}\cup\{B_{i,j}\}$ has the finite
  intersection property. So let $F$ be a finite subset of
  $\widetilde{Y}\cup\{B_{i,j}\}$. Let
  $F_0=F\cap\widetilde{Y}$. Because $F_0$ is a finite subset of
  $\widetilde{Y}$, we have that there is a $\sigma\prec Y$ such that
  $F_0\subseteq\widetilde\sigma$. By assumption, extend $\sigma$ to a
  $\tau$ such that $B_{i,j}\in\widetilde\tau$. Then
  $F\subseteq\widetilde\tau$, and so, by Lemma \ref{L:finitepaths},
  $F$ has nonempty intersection.
\end{proof}

We now move on to prove Lemma \ref{L:main}

\begin{proof}

  Let $f$ be FIP for $\mathbb{X}$. Let $Y=\Phi^{f}$, the unique path
  through $2^{<\omega}$ such that
  $\widetilde Y\subseteq\widetilde{f}$. (Note that such a $Y$ is
  guaranteed to exist by Lemma \ref{L:containsapath}, and is equal to
  $\Phi^f$ by the comment after the proof of the lemma.)

  If $Y$ is not 1-generic, then $\widetilde Y\neq\widetilde f$, as
  follows. Let $i$ be such that $Y$ neither meets nor avoids
  $W_i$. Fix $j\notin K$. Then $B_{i,j}$ is not in
  $\widetilde{Y}$. This is because $Y$ does not meet $W_i$, and so we
  never put any $B_i$ labels onto initial segments of $Y$.

  Furthermore, every initial segment $\sigma$ of $Y$ has an extension
  $\tau\succeq\sigma$ such that $B_{i,j}\in\widetilde\tau$. This is
  because $Y$ does not avoid $W_i$, and so every initial segment of
  $Y$ has an extension $\tau$ that is in $W_i$, and every time such a
  $\tau$ enters $W_i$, we put the label $B_{i,j}$ onto that
  $\tau$. (Note, this uses the fact that $j\notin K$, and $K$ is c.e.,
  so we we have that $\forall s\ j\notin K_s$.)

  By Lemma \ref{L:cofinal}, $\widetilde Y\cup\{B_{i,j}\}$ has the
  finite intersection property, and so $\widetilde Y$ is not FIP,
  because it is not maximal. Thus, $\widetilde Y\neq\widetilde f$.

  \vspace{5pt}

  So now, let $B_{i_0,j_0}\in\widetilde f\setminus\widetilde Y$. Then
  we claim that $\{j:B_{i_0,j}\in\widetilde f\}={\bf a}r K$. (No
  $A$-labeled sets can be in $\widetilde f\setminus\widetilde Y$,
  because $A$-labels each occur only once, so if they are not in
  $\widetilde Y$, then there is an $A$-labeled node in $\widetilde Y$
  that they do not intersect.)

  To see this, first of all, note that $j_0\notin K$. Otherwise,
  $B_{i_0,j_0}$ only occurs at finitely many locations, and so by
  Lemma \ref{L:cofinal}, $\widetilde Y\cup\{B_{i_0,j_0}\}$ cannot have
  the finite intersection property.

  Note also that there is no $j$ such that $B_{i_0,j}\in\widetilde Y$.
  This is because $j_0\notin K$, and so, at any stage, if we put any
  $B_{i_0,j}$ onto any initial segment of $Y$, we would have placed
  $B_{i_0,j_0}$ onto that same location, contradicting our choice of
  $B_{i_0,j_0}$.

  If $j\notin K$, then $B_{i_0,j}$ occurs at exactly the locations
  where $B_{i_0,j_0}$ occurs, and so $B_{i_0,j}$ has finite
  intersection with exactly the sets that $B_{i_0,j_0}$ does, so if
  $B_{i_0,j_0}\in\widetilde f$, then $B_{i_0,j}\in\widetilde f$. (In
  this construction, $B_{i_0,j_0}=B_{i_0,j}$. If this is concerning to
  the reader, the construction could be modified so that we use the
  construction we present to add odd numbers to our sets, and we use
  the even numbers to give each set a single element that is in no
  other set. This does not change our FIP sequences, and ensures that
  our sets are all distinguished.)

  If $j\in K$, then $B_{i_0,j}$ occurs at finitely many locations, and
  so by Lemma \ref{L:cofinal}, $\widetilde Y\cup\{B_{i_0,j}\}$ does
  not have the finite intersection property, and so
  $B_{i_0,j}\notin\widetilde f$.

  Thus, $\{j:B_{i_0,j}\in\widetilde f\}={\bf a}r K$, and so
  $\Psi_{i_0}^f$ is a computation of $K$.
\end{proof}

We now prove the main result of this section.

\begin{thm}\label{T:fipcomputesgeneric}

  Let $X$ be a real that can compute an FIP function for any uniformly
  computable sequence of reals. Then $X$ can compute a Cohen
  1-generic.

\end{thm}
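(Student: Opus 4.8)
The plan is to apply the hypothesis of the theorem directly to the sequence $\mathbb{X}$ constructed in Sections \ref{fip1} and \ref{fip2}. Since $\mathbb{X}$ is a uniformly computable sequence of computable reals (and is not all empty, as every path accumulates common elements), $X$ computes some function $f$ that is FIP for $\mathbb{X}$. I would then set $Y = \Phi^f$. By Lemma \ref{L:containsapath} and the remark following its proof, $Y$ is the unique infinite path through $2^{<\omega}$ with $\widetilde Y \subseteq \widetilde f$, and it is uniformly computable from $f$, hence from $X$.

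Next I would split into two cases according to whether $Y$ is 1-generic. If it is, then $X$ already computes a Cohen 1-generic, namely $Y$, and we are finished. If $Y$ is not 1-generic, then Lemma \ref{L:main} yields an index $i$ with $\Psi_i^f = K$; since $\Psi_i^f$ is computable from $f$ and $f \leq_T X$, this gives $X \geq_T K \equiv_T \emptyset'$. Because $\emptyset'$ computes a 1-generic (the standard finite-extension construction relativized to the halting set, see Soare \cite{So}), in this case too $X$ computes a Cohen 1-generic. Combining the two cases completes the proof.

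I would stress that the argument is genuinely nonuniform: $X$ has no way in general of recognizing which of the two cases obtains, nor, in the second case, of locating the index $i$ that works; all we need, and all the dichotomy delivers, is the mere existence of a single 1-generic computable from $X$. In that sense there is no serious obstacle remaining at this stage — the technical heart of the matter has already been dealt with in the construction and in Lemma \ref{L:main}, where failure of 1-genericity of $\Phi^f$ is converted into a computation of $K$ via the $B$-labels. The only things to be careful about are the bookkeeping in that conversion (which $B_{i,j}$ appear cofinally along $Y$ versus only at finitely many nodes, and that no $A$-labelled set can lie in $\widetilde f \setminus \widetilde Y$) and the elementary fact that $\emptyset'$ bounds a 1-generic; both are already in hand from the preceding lemmas and classical computability theory.
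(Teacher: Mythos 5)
Your proposal is correct and follows essentially the same route as the paper's own proof: apply the FIP hypothesis to the constructed sequence, take $Y=\Phi^f$, and use the dichotomy of Lemma \ref{L:main} to get either a 1-generic directly or a computation of $K$ (which bounds a 1-generic). Your added remarks on the nonuniformity of the argument accurately reflect the discussion in the paper's introduction.
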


\begin{proof}

  The sequence of reals that we constructed is uniformly
  computable. Let $f$ be an FIP function for the sequence that we
  constructed. By Lemma \ref{L:main}, either $\Phi^f$ computes a
  1-generic, or there is some $i$ such that $\Psi_i^f$ computes
  $K$. We know that $K$ can be used to compute a 1-generic, and so it
  follows that in either case, $f$ can compute a 1-generic.

  Thus, if $X$ can compute such an $f$, then $X$ can compute a
  1-generic.
\end{proof}

\section{The 2IP Case}\label{2ip}
In this section we will modify the proof of the previous section to
work with the more delicate property of 2IP in place of FIP.  The
problems are caused by the $B$ sets from the previous
construction. Because they can appear multiple times on $2^{<\omega}$
in incomparable locations, it is possible for it there to be a finite
set involving $B$ sets (and potentially other sets as well) such that
any two of them appear together along some path, but such that they do
not all appear anywhere on $2^{<\omega}$ on a single finite path.

In this modification, we will produce a construction in which every
2IP set is, in fact, FIP. To do this, we will need to enforce
additional intersections among the sets: given any finite set with the
2-intersection property, we must have that finite set have a nonempty
intersection. We will accomplish this by adding a priority system to
the construction, allowing us greater control over which $B$ labels
occur in which locations.


The new construction is as follows.

We assume that for every $i$, $W_i$ never enumerates the empty node
$\lambda$. The set of indices of such $W_i$ is uniformly computable,
so this does not change the computability of the construction, and
every real trivially meets every $W_i$ that enumerates $\lambda$, so
meeting or avoiding those that do not is sufficient to be 1-generic.

We use the same $A$ labels as before.

The $B$ labels will now be indexed $B_{i,j,\nu}$, with $i,j\in\omega$,
and $\nu\in2^{<\omega}$. We use these labels in the same way that we
used $B_{i,j}$ previously, except that the $\nu$ is used to keep track
of where higher priority strategies have acted: At stage $s$, for each
$i<s$, in order, if $W_i$ enumerates $\sigma$ in $\leq s$ steps, then
let $\nu\preceq\sigma$ be the longest initial segment of $\sigma$ such
that $\nu$ has no extensions $\tau$ with a $B_{i'}$ label on it for
$i'<i$. If no such $\nu$ exists, then do nothing.

(Note, in particular, that this implies that lower priority labels
never get placed onto initial segments of nodes that already have
higher priority labels, except by the actions described in the next
paragraph.) Otherwise, if no initial segments of $\sigma$ except
possibly the empty string $\lambda$ have a $B_i$ label yet, then for
every $j$ such that $j\notin K_s$, we place the label $B_{i,j,\nu}$
onto the node $\sigma$.

In addition, however, when we ever place a label $B_{i_0,j_0,\nu_0}$
onto a node $\sigma$, for every $i<i_0$, for every $j$, and for every
$\nu\preceq\sigma$, we place the label $B_{i,j,\nu}$ on the empty
node, $\lambda$. (In essence, we discard any label we know we will
never use again.)


We build our reals $X_i$ as before, except using this differently
labeled tree.

This completes the construction.

We first prove that the modifications introduced achieve the desired
result: that at the end of the construction, every 2IP set is FIP.

\begin{lem}\label{L:2ipisfip}

  Given any finite set, $S$, of reals from among the reals that we
  build, those reals have the 2-intersection property if and only if
  there is a node $\sigma\in 2^{<\omega}$ such that
  $\widetilde\sigma\supseteq S$.

\end{lem}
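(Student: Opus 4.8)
The plan is to prove Lemma~\ref{L:2ipisfip} by adapting the proof of Lemma~\ref{L:finitepaths}, with the nontrivial direction being that the $2$-intersection property already forces all of $S$ onto a single finite path. One direction is unchanged: if $S\subseteq\widetilde\sigma$ for some $\sigma\in2^{<\omega}$, then from the stage at which every label in $S$ has appeared on an initial segment of $\sigma$ onward, a common element is placed into every real in $\widetilde\sigma\supseteq S$, so $S$ has nonempty intersection and in particular the $2$-intersection property. So assume $S$ has the $2$-intersection property; I want a single $\sigma$ with $\widetilde\sigma\supseteq S$.

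First I would dispose of the $A$ labels in $S$: if $A_\sigma,A_\tau\in S$ with $\sigma,\tau$ incomparable, then (as each $A$ label occurs at exactly one node) $A_\sigma\cap A_\tau=\emptyset$, contradicting the $2$-intersection property; so the $A$-subscripts occurring in $S$ are linearly ordered, and there is a longest one, $\sigma_A$ (or none, in which case $S$ consists only of $B$ labels). Every $A$ label in $S$ lies on $\widetilde\tau$ for any $\tau\succeq\sigma_A$. Now the heart of the matter is the $B$ labels. The point of the new indexing and of the ``discarding'' clause is precisely this: when $j_0\notin K$, a label $B_{i,j,\nu}$ that is ever placed on a node other than $\lambda$ is placed only on nodes extending one specific $\nu$, and moreover the priority mechanism guarantees that the set of nodes carrying any $B_i$-label (for a fixed $i$) forms a collection over which, along any single enumerated $\sigma$, at most one such $\nu$ is ``active.'' I would extract from the construction the key structural fact: for each $i$, the nodes bearing a $B_{i,\cdot,\cdot}$ label (ignoring $\lambda$) all lie below a single node, or more precisely that if $B_{i,j,\nu}$ and $B_{i,j',\nu'}$ both appear at non-$\lambda$ nodes that are incomparable, then one of them has in fact been discarded onto $\lambda$ by a lower-$i'$ action — wait, no; rather, the discarding clause fires when a \emph{higher} priority (smaller index) label is placed, so the correct statement is that $B$ labels with the \emph{same} first index $i$ that survive at non-$\lambda$ nodes all share a common initial segment $\nu$, hence lie on a common path; and across different first indices, the longer-priority-initial-segment bookkeeping via $\nu$ ensures the surviving ones are nested. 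Establishing this ``all surviving $B$ labels relevant to $S$ lie on a common path'' claim, carefully tracking the two placement rules, is the main obstacle.

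Granting that structural claim, the argument finishes as in Lemma~\ref{L:finitepaths}: since $S$ has the $2$-intersection property, any two reals in $S$ intersect, so by Lemma~\ref{L:finitepaths} any two of their labels lie on a common finite path; combining the linear order on the $A$-subscripts with the fact that the surviving locations of the $B$ labels in $S$ are comparable to one another and comparable to $\sigma_A$, pick a node $\sigma$ long enough to extend $\sigma_A$ and to have, for each $B$ label in $S$, passed through a node where that label was placed. Concretely, for each label $\ell\in S$, use the $2$-intersection property applied to $\ell$ together with $A_{\sigma_A}$ (or, if $S$ has no $A$ labels, to two of the $B$ labels) to locate via Lemma~\ref{L:finitepaths} a node where both appear; the nestedness of these locations lets their union be taken, yielding a single $\sigma$ with $\widetilde\sigma\supseteq S$.

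An alternative, cleaner packaging I would consider is to prove directly, by induction on $|S|$, that $2$-intersection implies $S$ lies on a common path, handling the $A$ labels first to reduce to the all-$B$ case, and then using the structural claim about each priority level in the inductive step; but either way the crux is the combinatorial lemma about which $B_{i,j,\nu}$ labels can simultaneously survive at incomparable nodes, which is exactly what the $\nu$-bookkeeping and the discard-onto-$\lambda$ rule were engineered to control.
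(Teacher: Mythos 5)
The easy direction and the disposal of the $A$ labels are fine, and you have correctly located the crux of the lemma: controlling which $B$ labels can survive at incomparable non-$\lambda$ nodes. But the proposal stops exactly there. You write that establishing the structural claim ``is the main obstacle'' and then proceed by ``granting that structural claim,'' so the heart of the proof is missing. Worse, the claim in the form you state it --- that all $B_{i,\cdot,\cdot}$ labels surviving at non-$\lambda$ nodes share a common $\nu$ and hence lie on a common path, and that across different first indices the survivors are nested --- is false as a global statement about the construction. If $W_i$ enumerates two incomparable nodes $\sigma$ and $\tau$ and no higher-priority label is ever placed on an extension of either, then $B_i$ labels are placed at $\sigma$ with $\nu=\sigma$ and at $\tau$ with $\nu=\tau$; neither is ever discarded onto $\lambda$, and they sit at incomparable locations forever. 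So no unconditional ``all surviving $B$ labels lie on a common path'' lemma is available.

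What is true, and what the paper actually proves, is a \emph{conditional} statement about pairs of $B$ labels that both belong to $S$: since $S$ has the $2$-intersection property, any two such labels intersect, hence (by the two-element case) appear together on some finite path, and the $\nu$-bookkeeping plus the discard rule then force one of three outcomes: (i) one of the two labels is at some point placed on $\lambda$, so it belongs to $\widetilde\sigma$ for \emph{every} $\sigma$ and can be deleted from $S$; (ii) the lower-priority label is only ever placed on extensions of a fixed node already carrying the higher-priority label, so the higher-priority label can be deleted; or (iii) the two labels have the same index $i$ and the same $\nu$, in which case every node carrying one carries the other, so one can be deleted. Each outcome removes one element of $S$, the inductive hypothesis (induction on $|S|$, secondarily on the number of $B$ labels) supplies a $\sigma$ for the smaller set, and the removed label is recovered for free --- this last step is exactly where the $\lambda$-discard earns its keep, a point your sketch never uses. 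Your ``alternative packaging'' by induction on $|S|$ is indeed the paper's route, but without this case analysis on pairs of $B$ labels in $S$ the induction step cannot be closed, and the finishing argument you give (taking a union of pairwise-comparable witness nodes) presupposes the very comparability that has not been established.
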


From this, by Lemma \ref{L:finitepaths}, we may conclude that for this
set $\mathbb{X}$ that we build, a set is 2IP if and only if it is FIP.

\begin{proof}

  Note first that if there is a node $\sigma\in 2^{<\omega}$ such that
  $\widetilde\sigma\supseteq S$, then $S$ has the finite intersection
  property, and so $S$ has the 2-intersection property.

  We prove the converse by induction on the size of $S$, and
  secondarily by induction on the number of $B$-labeled sets in $S$.

  If $|S|=2$, then the lemma holds because the two sets in $S$
  intersect if and only if some element was placed into both of
  them. At the stage when that element was placed into both the sets,
  it must be because there was some path $\sigma\in 2^{<\omega}$ such
  that both those sets were in $\widetilde\sigma$.

  If $S$ has no $B$-labeled set, then the lemma holds because $A$
  labels still appear at most once each on $2^{<\omega}$. Thus, either
  there is some pair of them on incomparable nodes, and so $S$ does
  not have the 2-intersection property, or all of their nodes are
  comparable, and so one of those nodes is an extension of all the
  other nodes.

  If $S$ has one $B$-labeled set, then the lemma holds because that
  label must occur on a node that is comparable to the maximal node
  among those with $A$ labels (otherwise, the $S$ would not have the
  2-intersection property). Thus, that maximal node can be extended to
  (or is already an extension of) a path including a node labeled with
  that $B$-label.

  Otherwise, let $|S|>2$, and let $B_{i_0,j_0,\nu_0}$ and
  $B_{i_1,j_1,\nu_1}$ be two sets in $S$. We now consider three cases.

  \vspace{5pt}

  \noindent {\bf Case 1:} $i_0<i_1$, and at some point, label
  $B_{i_1,j_1,\nu_1}$ is placed on to $\lambda$.

  \vspace{5pt}

  Then, by induction, there is a $\sigma$ such that
  $\widetilde\sigma\supseteq (S\setminus\{B_{i_1,j_1,\nu_1}\})$. But
  then $\widetilde\sigma\supseteq S$, because $\lambda\preceq\sigma$.

  \vspace{5pt}

  \noindent {\bf Case 2:} $i_0<i_1$, and $B_{i_1,j_1,\nu_1}$ is never
  placed on to $\lambda$.

  \vspace{5pt}

  In this case, first note that because $S$ has 2IP,
  $B_{i_0,j_0,\nu_0}$ and $B_{i_1,j_1,\nu_1}$ have nonempty
  intersection, and so $B_{i_1,j_1,\nu_1}$ must at some point be
  placed on a node that is comparable to a node that already has the
  label $B_{i_0,j_0,\nu_0}$.

  Let $\sigma_0$ be a node with the label $B_{i_0,j_0,\nu_0}$ that at
  at some point has an extension with the label
  $B_{i_1,j_1,\nu_1}$. (If $B_{i_0,j_0,\nu_0}$ were on an extension of
  $B_{i_1,j_1,\nu_1}$, then $B_{i_1,j_1,\nu_1}$ would be placed on
  $\lambda$.)

  The we have that, $\nu_1\succeq\sigma_0$ because the $\nu$ value of
  a label is always the by definition of how $\nu$ values are chosen
  when $B$ labels are placed. For the same reason, the label
  $B_{i_1,j_1,\nu_1}$ is only ever placed on extensions of
  $\sigma_0$. (Instances of $B_{i_1}$ labels that are not on
  extensions of $\sigma_0$ would have a different $\nu$ value.) We
  proceed as in Case 1, but with $i_0$ and $i_1$ reversed:

  By induction, there is a $\sigma$ such that
  $\widetilde\sigma\supseteq (S\setminus\{B_{i_0,j_0,\nu_0}\})$. But
  then $\widetilde\sigma\supseteq S$, because $B_{i_1,j_1,\nu_1}$ is
  only ever placed on extensions of $\sigma_0$, and so
  $\sigma_0\preceq\sigma$. By choice of $\sigma_0$, $\sigma_0$ has the
  label $B_{i_0,j_0,\nu_0}$.

  \vspace{5pt}

  \noindent {\bf Case 3:} $i_0=i_1$.

  \vspace{5pt}

  If either $B_{i_0,j_0,\nu_0}$ or $B_{i_1,j_1,\nu_1}$ is ever placed
  onto $\lambda$, then we proceed as in Case 1.

  If $\nu_0=\nu_1$, then by symmetry, assume either that $j_0$ enters
  $K$ before $j_1$ or that neither $j_0$ nor $j_1$ is in $K$.
  Then every node with the label $B_{i_0,j_0,\nu_0}$ also has the
  label $B_{i_1,j_1,\nu_1}$. By induction, we may let $\sigma$ be such
  that $\widetilde\sigma\supseteq (S\setminus\{B_{i_1,j_1,\nu_1}\})$.
  $B_{i_0,j_0,\nu_0}$ is in $\widetilde\sigma$, and so
  $B_{i_1,j_1,\nu_1}$ is also in $\widetilde\sigma$.

  We now claim that it is not possible that $\nu_0\neq\nu_1$, and
  neither $B_{i_0,j_0,\nu_0}$ nor $B_{i_1,j_1,\nu_1}$ is ever placed
  onto $\lambda$.

  Let $\sigma_0$ be any path whose labels include $B_{i_0,j_0,\nu_0}$
  and $B_{i_1,j_1,\nu_1}$. Then $\nu_0$ and $\nu_1$ are each the
  respective shortest initial segments of $\sigma_0$ that had no
  higher priority extensions when the respective labels were
  placed. Thus in between when the two labels were placed, one of
  them, by symmetry $\nu_0$ must have had a higher priority $B$ label
  placed on an extension of it. But then $B_{i_0,j_0,\nu_0}$ is placed
  on $\lambda$, providing a contradiction.
\end{proof}

With this lemma, we have that for this new construction, the FIP sets
are exactly the 2IP sets, because the finite intersection property and
the 2-intersection property agree on finite sets.

The construction is defined as before, and we define $\Phi$ as before,
so every lemma from the previous two sections holds except possibly
Lemma \ref{L:main}, which is the only Lemma that references the
specifics of how the $B$ sets are placed, and also the only lemma that
references the specifics of how the $\Psi_i$ were defined. We now
define our analogues of the $\Psi_i$ that we will need for this
construction.

Let $\Psi_{i,\nu}$ be the functional such that that
$\Psi_{i,\nu}^S(j)=1$ if $j\in K$, and $\Psi_{i,\nu}^S(j)=0$ if
$B_{i,j,\nu}\in S$. Formalize this in the same way as for the $\Psi_i$
of the previous construction.

\begin{lem}\label{L:main2}

  Let $f$ be FIP (or equivalently 2IP) for $\mathbb{X}$. Let
  $Y=\Phi^{f}$ and assume $Y$ is not 1-generic. Then there exist some
  $i,\nu$ such that $\Psi_{i,\nu}^f$ is a computation K.

\end{lem}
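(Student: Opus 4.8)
The plan is to mirror the proof of Lemma \ref{L:main}, making the adjustments forced by the new indexing of the $B$ labels by $\nu\in2^{<\omega}$. Let $f$ be FIP (equivalently 2IP) for $\mathbb{X}$, let $Y=\Phi^f$, and assume $Y$ is not $1$-generic. As before, fix $i$ such that $Y$ neither meets nor avoids $W_i$, and fix $j\notin K$. The first step is to show that $\widetilde f\neq\widetilde Y$ by exhibiting a $B$ label in $\widetilde f\setminus\widetilde Y$. Since $Y$ does not avoid $W_i$, every initial segment $\sigma\prec Y$ has an extension entering $W_i$; at the stage such a $\tau$ enters $W_i$, the construction places a label $B_{i,j,\nu}$ on $\tau$ for the appropriate $\nu=\nu(\tau)$ (using $j\notin K$, so $j\notin K_s$ at every stage). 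The complication is that the $\nu$ attached can vary with $\tau$, so instead of a single $B_{i,j}$ appearing cofinally along $Y$ as in the FIP case, we get a (possibly infinite) family of labels $B_{i,j,\nu}$ appearing along $Y$. I would argue that nonetheless \emph{one particular} $\nu^\ast$ recurs: either some initial segment of $Y$ eventually carries a $B_{i'}$-label with $i'<i$, in which case past that stage all the $\nu(\tau)$ stabilize to a fixed $\nu^\ast\prec Y$ and $B_{i,j,\nu^\ast}$ occurs on cofinally many extensions of segments of $Y$; or no initial segment of $Y$ ever carries such a higher-priority label, in which case $\nu(\tau)=\tau$ varies but one checks using Lemma \ref{L:cofinal} together with Lemma \ref{L:2ipisfip} that still some $B_{i,j,\nu}$ with $\nu\prec Y$ must lie in $\widetilde f$ by maximality — I would in fact handle both cases uniformly by noting that $\widetilde Y$ is not maximal (some $B_{i,j,\nu}$-augmentation has FIP), hence $\widetilde Y\neq\widetilde f$, and then \emph{choose} $B_{i_0,j_0,\nu_0}\in\widetilde f\setminus\widetilde Y$ to analyze.

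Having fixed such a $B_{i_0,j_0,\nu_0}\in\widetilde f\setminus\widetilde Y$, the second step is to prove $\{j:B_{i_0,j,\nu_0}\in\widetilde f\}=\N\setminus K$, which immediately gives $\Psi_{i_0,\nu_0}^f=K$. As in Lemma \ref{L:main}: first $j_0\notin K$, else $B_{i_0,j_0,\nu_0}$ occurs only finitely often (once it's dumped on $\lambda$, or only on the finitely many $\tau\in W_{i_0}$ before $j_0$ enters $K$) and then by Lemma \ref{L:cofinal}, $\widetilde Y\cup\{B_{i_0,j_0,\nu_0}\}$ cannot have FIP — wait, one must be a little careful since $\lambda\prec Y$, so I would instead phrase this via: if $j_0\in K$ then $B_{i_0,j_0,\nu_0}$ is placed only on $\lambda$ and on finitely many other nodes, and since it is not on $\lambda$ in $\widetilde Y$'s sense... actually the cleanest route is that $B_{i_0,j_0,\nu_0}\notin\widetilde Y$ forces $\nu_0$ itself not to work as... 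I would simply reuse the Case analysis of Lemma \ref{L:2ipisfip}: $B_{i_0,j_0,\nu_0}$ and all the $A_\sigma$ with $\sigma\prec Y$ lie in $\widetilde f$ which has FIP, so by Lemma \ref{L:2ipisfip} every finite subset sits on some common node, forcing $B_{i_0,j_0,\nu_0}$ to recur cofinally above segments of $Y$ (or else a single common node would put it on $\widetilde Y$), and this cofinal recurrence is incompatible with $j_0\in K$ (finitely many placements) and with $B_{i_0,j_0,\nu_0}$ ever being dumped on $\lambda$. Then: no $B_{i_0,j,\nu_0}$ lies in $\widetilde Y$ (if one did, $B_{i_0,j_0,\nu_0}$ would be placed on the same node since $j_0\notin K$, contradicting $B_{i_0,j_0,\nu_0}\notin\widetilde Y$); if $j\notin K$, then $B_{i_0,j,\nu_0}$ is placed at exactly the same nodes as $B_{i_0,j_0,\nu_0}$ (the construction's condition "$j\notin K_s$" treats all such $j$ identically, and the $\lambda$-dumping is also indexed uniformly over $j$), so it has the same intersection pattern and lies in $\widetilde f$; if $j\in K$, then $B_{i_0,j,\nu_0}$ occurs only finitely often, so by Lemma \ref{L:cofinal} (applied after noting it's not cofinal along $Y$) $\widetilde Y\cup\{B_{i_0,j,\nu_0}\}$ lacks FIP, hence $B_{i_0,j,\nu_0}\notin\widetilde f$.

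Concluding, $\Psi_{i_0,\nu_0}^f(j)$ halts with output $1$ exactly when $j\in K$ and with output $0$ exactly when $B_{i_0,j,\nu_0}\in\widetilde f$, and we have just shown these two conditions partition $\N$; so $\Psi_{i_0,\nu_0}^f=K$, as required.

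The main obstacle I anticipate is the first step — pinning down which $B$ label actually witnesses $\widetilde f\setminus\widetilde Y\neq\emptyset$ and, crucially, that its third coordinate $\nu_0$ is fixed (so that $\Psi_{i_0,\nu_0}$ is a single functional). In the FIP construction there was literally one label $B_{i,j}$ recurring cofinally; here the $\nu$-bookkeeping and the $\lambda$-dumping mean a priori many labels $B_{i,j,\nu}$ flicker along $Y$, and one has to invoke the priority structure (only finitely many higher-priority indices $i'<i$, each acting on at most one initial segment of $Y$ before its $\nu$ stabilizes) to see that the relevant $\nu$ along $Y$ eventually settles, together with Lemma \ref{L:2ipisfip} to transfer "cofinal along $Y$" into "in $\widetilde f$." Once a single $(i_0,\nu_0)$ is isolated, the rest is a routine adaptation of Lemma \ref{L:main}.
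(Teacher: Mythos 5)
Your second step---fixing an arbitrary $B_{i_0,j_0,\nu_0}\in\widetilde f\setminus\widetilde Y$ and showing $\{j:B_{i_0,j,\nu_0}\in\widetilde f\}=\N\setminus K$---is essentially sound and, if anything, slightly leaner than the paper's version (the paper instead characterizes \emph{exactly} which labels outside $\widetilde Y$ occur cofinally along $Y$, concluding $\widetilde f=\widetilde Y\cup\{B_{i_0,j,\nu_0}:j\notin K\}$; your route avoids having to rule out all other $(i,\nu)$ because you only analyze a label you already know is in $\widetilde f$). The genuine gap is in your first step, and it is exactly where you predicted: showing that $\widetilde Y$ is not maximal, i.e.\ that some \emph{single} label $B_{i,j,\nu}$ with a \emph{fixed} third coordinate occurs on extensions of every initial segment of $Y$ (this is what Lemma \ref{L:cofinal} requires). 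Your proposed dichotomy does not do this. In the first branch you claim that once some initial segment of $Y$ carries a $B_{i'}$ label with $i'<i$, the values $\nu(\tau)$ stabilize; but the $\nu$ attached to a placement at $\tau$ is governed by higher-priority labels on \emph{extensions} of initial segments of $\tau$, i.e.\ by activity \emph{off} the path, not by labels sitting \emph{on} $Y$. A higher-priority strategy $i'<i$ that keeps acting on extensions of longer and longer initial segments of $Y$ (without ever landing on $Y$) forces strategy $i$'s $\nu$-value to change forever, and can also simply block strategy $i$ from acting at all. In that scenario strategy $i$ contributes nothing addable to $\widetilde Y$, and the witness must come from $i'$ (or something of even higher priority) instead. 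Your second branch ("$\nu(\tau)=\tau$") likewise misreads how $\nu$ is chosen.

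The paper closes this gap with a minimality argument that your sketch does not reproduce: take $i_1$ \emph{least} such that $Y$ neither meets nor avoids $W_{i_1}$, then take $i_0$ \emph{least} such that some $B_{i_0}$ label appears on an extension of every initial segment of $Y$ while no $B_{i_0}$ label appears on an initial segment of $Y$ other than $\lambda$; the existence of $i_0$ is proved by showing that if no $i<i_1$ has this property then $i_1$ does. Minimality of $i_0$ then guarantees that every $i<i_0$ acts only above a bounded portion of $Y$ (either it is blocked past some point, or it places a label on $Y$ itself and thereafter never acts above that node), so there is a shortest $\nu_0\prec Y$ beyond all higher-priority activity, and from some stage on every placement by strategy $i_0$ above $Y$ uses exactly the value $\nu=\nu_0$. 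Only then do you get the fixed pair $(i_0,\nu_0)$ and the cofinal occurrence of $B_{i_0,j,\nu_0}$ for $j\notin K$ that feeds into Lemma \ref{L:cofinal} and makes $\Psi_{i_0,\nu_0}$ a single well-defined functional. This induction on the finitely many indices below $i_1$ is the substantive new content of Lemma \ref{L:main2} relative to Lemma \ref{L:main} (and is precisely where the paper later locates its use of $\Sigma_2$ induction), so it cannot be waved through as "the priority structure settles."
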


As a reminder, in this construction, we assume that for every $i$,
$W_i$ never enumerates $\lambda$.

\begin{proof}

  Let $f$ be FIP for $\mathbb{X}$. Let $Y=\Phi^{f}$, the unique path
  through $2^{<\omega}$ such that
  $\widetilde Y\subseteq\widetilde{f}$.

  If $Y$ is not 1-generic, then let $i_1$ be minimal such that $Y$
  neither meets nor avoids $W_{i_1}$. Let $i_0$ be minimal such that
  there is no $B_{i_0}$ label on an initial segment of $Y$ besides
  $\lambda$, and such that every initial segment of $Y$ has an
  extension with a $B_{i_0}$ label. We claim that $i_0$ exists. To
  prove this, we show that if every $i<i_1$ does not satisfy the
  property that $i_0$ must satisfy, then $i_1$ satisfies that
  property:

  Assume that every $i<i_1$ does not satisfy this property, and let
  $\sigma_1$ be the shortest initial segment of $Y$ such that for
  every $i<i_1$
  there is no extension of $\sigma_1$ with a $B_i$ label. Then,
  because $Y$ neither meets nor avoids $W_{i_1}$, there are cofinally
  many locations along $Y$ where a $B_{i_1}$ label will want to be
  placed. If those locations are extensions of $\sigma_1$, then that
  label will actually be placed, because a $\nu$ value will be found,
  and so every initial segment of $Y$ has an extension with a
  $B_{i_1}$ label. Furthermore $Y$ does not meet $W_i$, and so no
  $B_{i_1}$ label will be placed on an initial segment of $Y$ besides
  $\lambda$.

  Therefore, $i_0$ exists. Let $\nu_0$ be the shortest initial segment
  of $Y$ such that for every $i<i_1$
  there is no extension of $\nu_0$ with a $B_i$ label. We now claim
  that $\Psi_{i_0,\nu_0}^f$ is a computation of $K$.

  To prove this, we first show that for every $j$,
  $B_{i_0,j,\nu_0}\notin\widetilde Y$. By choice of $i_0$, we know
  that $B_{i_0,j,\nu_0}$ does not appear on an initial segment of $Y$
  besides $\lambda$, and by choice of $\nu_0$, $B_{i_0,j,\nu_0}$ is
  never put on $\lambda$.

  Next we show that if $B_{i,j,\nu}\notin \widetilde{Y}$, then
  $B_{i,j,\nu}$ occurs on extensions of all initial segments of $Y$ if
  and only if $i=i_0$, $\nu=\nu_0$, and $j\notin K$.

  By assumption, every initial segment of $Y$ has an extension with a
  $B_{i_0}$ label. If that label was placed after the stage at which
  every $\sigma\prec\nu_0$ is seen to have an extension with a $B_i$
  label for some $i<i_0$, then that $B_{i_0}$ label has $\nu$ value
  $\nu_0$. If $\j\notin K$, then the label $B_{i_0,j,\nu_0}$ is placed
  at that location together with the aforementioned label. Thus, if
  $i=i_0$, $\nu=\nu_0$, and $j\notin K$, then $B_{i,j,\nu}$ occurs on
  extensions of all initial segments of $Y$.

  We prove the converse in cases. Assume
  $B_{i,j,\nu}\notin\widetilde{Y}$.

  If $i<i_0$, then by choice of $i_0$, $B_{i,j,\nu}$ does not occur on
  extensions of all initial segments of $Y$.

  If $j\in K$, then $B_{i,j,\nu}$ appears at only finitely many
  locations, and so $B_{i,j,\nu}$ does not occur on extensions of all
  initial segments of $Y$.

  If $i=i_0$, and $j\notin K$, and $\nu\neq\nu_0$, then after the
  stage at which every $\sigma\prec\nu_0$ is seen to have an extension
  with a $B_i$ label for some $i<i_0$, the only $\nu$ value used for
  $B_{i}$ labels is $\nu_0$, so $B_{i,j,\nu}$ does not occur on
  extensions of all initial segments of $Y$.

  Finally, if $i>i_0$, then first note that if $B_{i,j,\nu}$ does not
  occur on $\lambda$, then $B_{i,j,\nu}$ only occurs on extensions of
  $\nu$, and so if $B_{i,j,\nu}$ occurs on extensions of all initial
  segments of $Y$, then $\nu\prec Y$. So then, choose $\sigma\prec Y$
  such that $\sigma\succ\nu$. By assumption on $i_0$, at some stage, a
  $B_{i_0}$ label is placed on an extension of $\sigma$. At that
  stage, $B_{i,j,\nu}$ is placed on $\lambda$, and so
  $B_{i,j,\nu}\in\widetilde{Y}$, providing a contradiction.

  Therefore, we may conclude that
  $\widetilde{f}=\widetilde{Y}\cup\{B_{i_0,j,\nu_0}:j\notin K\}$ has
  the finite intersection property, and no other reals may be added to
  $\widetilde{Y}$ while preserving the finite intersection property.

  Thus $\Psi_{i_0,\nu_0}^f$ is a computation K.
\end{proof}

As before, we may now conclude our desired result.

\begin{thm}\label{T:2ipcomputesgeneric}

  Let $X$ be a real that can compute an 2IP function for any uniformly
  computable sequence of reals. Then $X$ can compute a Cohen
  1-generic.

\end{thm}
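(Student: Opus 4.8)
The plan is to mirror the proof of Theorem \ref{T:fipcomputesgeneric} exactly, but fed by the modified construction of Section \ref{2ip} and the strengthened bookkeeping lemmas established there. First I would invoke the fact that the sequence $\mathbb{X}$ produced by the new construction is uniformly computable (it is built the same way as before, just over a tree with a more elaborate labeling scheme), so that $X$, being 2IP-bounding, computes some $f$ that is 2IP for $\mathbb{X}$. The essential point, already recorded in Lemma \ref{L:2ipisfip}, is that for this particular $\mathbb{X}$ the 2-intersection property and the finite intersection property coincide on finite sets; hence $f$ is in fact FIP for $\mathbb{X}$, and all the structural lemmas of Section \ref{fip1} (Lemmas \ref{L:finitepaths}, \ref{L:containsapath}, and the definition of $\Phi$) apply verbatim. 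In particular $Y=\Phi^f$ is the unique path through $2^{<\omega}$ with $\widetilde Y\subseteq\widetilde f$, and $Y$ is computable from $f$.

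Next I would split into the same dichotomy as before. If $Y$ is 1-generic, we are done: $f$ computes $Y$, hence $X$ does. If $Y$ is not 1-generic, then by Lemma \ref{L:main2} — which is precisely the analogue of Lemma \ref{L:main} for the modified construction, and the only lemma that had to be reproved because it depends on how the $B$ labels and the $\Psi$ functionals are defined — there exist $i,\nu$ such that $\Psi_{i,\nu}^f$ is a computation of $K$. Since $\Psi_{i,\nu}$ depends only on the range of its oracle, $\Psi_{i,\nu}^f$ is $f$-computable, so $f$ computes $K$, and therefore $X$ computes $K\equiv_T\emptyset'$. Finally, $\emptyset'$ computes a Cohen 1-generic (build $G$ by the standard finite-extension argument, using $\emptyset'$ to decide at each stage whether the current requirement $W_e$ has an extension of the current initial segment). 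So in the second case too, $X$ computes a 1-generic. Combining the two cases gives the theorem.

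There is no real obstacle left at this stage: all the work has been pushed into Lemmas \ref{L:2ipisfip} and \ref{L:main2}, which are assumed. The only thing to be careful about in writing it up is the logical chain ``$2$IP for $\mathbb{X}$ $\Rightarrow$ FIP for $\mathbb{X}$'': one should note explicitly that being FIP is a statement about the \emph{set} $\widetilde f$, that $\widetilde f$ having the $2$-intersection property is equivalent (by Lemma \ref{L:2ipisfip} together with Lemma \ref{L:finitepaths}) to its having the finite intersection property, and that maximality with respect to one property is therefore maximality with respect to the other — so $f$ being $2$IP for $\mathbb{X}$ literally \emph{is} $f$ being FIP for $\mathbb{X}$. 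Once that identification is made, the proof is a line-for-line transcription of the proof of Theorem \ref{T:fipcomputesgeneric} with ``Lemma \ref{L:main}'' replaced by ``Lemma \ref{L:main2}'' and ``$\Psi_i$'' replaced by ``$\Psi_{i,\nu}$''.
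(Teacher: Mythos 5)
Your proposal is correct and follows essentially the same route as the paper, which proves Theorem \ref{T:2ipcomputesgeneric} by transcribing the proof of Theorem \ref{T:fipcomputesgeneric} with Lemma \ref{L:main2} in place of Lemma \ref{L:main}, after using Lemma \ref{L:2ipisfip} to identify the 2IP and FIP solution sets for the modified $\mathbb{X}$. Your explicit remark that maximality with respect to the 2-intersection property coincides with maximality with respect to the finite intersection property is exactly the identification the paper relies on.
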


\section{Reverse Mathematics}

In this section, we discuss the reverse mathematical consequences of
our work from the previous sections. We discuss the reverse
mathematical principles FIP and 2IP, defined by Dzhafarov and Mummert
in \cite{DM13}, we define the principle ``1-GEN,'' and we use work
from this paper as well as from \cite{DDGT} to prove that FIP and
1-GEN are equivalent over RCA$_0$.
 
In \cite{DM13}, it was proved that FIP implies 2IP over RCA$_0$, and
our work from Section \ref{2ip} shows that 2IP also implies 1-GEN, and
therefore FIP. However, this implication appears to require
I$\Sigma_2$ for the proof. Our proof utilizes I$\Sigma_2$ in one
location, and B$\Sigma_2$ in two others.

\begin{defn}[Dzhafarov, Mummert \cite{DM13}]
  FIP is the principle of 2nd order arithmetic that says:

  ``Let $\mathbb{X}$ be a real, thought of as coding the sequence
  $\langle X_i\rangle$ of reals that are its columns. Assume at least
  one $X_i$ is nonempty. Then there exists an $f$ such that
  $\{X_{f(i)}\}$ is maximal with respect to the finite intersection
  property.''
\end{defn}

(Our original definition of an FIP real was over computable values of
$\mathbb{X}$. This statement can thus be thought of as the statement
``every $\mathbb{X}$, computable or not, has an $f$ that is FIP with
respect to $\mathbb{X}$.'')

\begin{defn}[Dzhafarov, Mummert \cite{DM13}]
  nIP is the same principle of 2nd order arithmetic, except with
  ``finite intersection property'' replaced with ``n-intersection
  property.''

\end{defn}

\begin{defn}
  1-GEN is the principle of 2nd order arithmetic that says:


  ``For every uniformly $\Sigma_1$ collection of sets, there is an $X$
  that meets or avoids every set in the collection.''

\end{defn}

Note that in practice, this says that for every $Y$, there exists a
$X$ such that $X$ is Cohen 1-generic over $X$, because the $\Sigma_1$
collection can be a universal $\Sigma_1^Y$ set, and because, given any
uniformly $\Sigma_1$ collection of sets, any $X$ that is Cohen
1-generic over the parameters in the definition would necessarily meet
or avoid ever set in the collection.

\medskip

We begin by demonstrating the equivalence between FIP and 1-GEN.

In \cite{DDGT}, it was proven that any Turing degree that can compute
a 1-generic is also an FIP degree. We summarize this proof, observe
that the proof relativizes and verify that the details can be carried
out in RCA$_0$ to prove that 1-GEN implies FIP over RCA$_0$.

\begin{prop}\label{1direction}
  RCA$_0\vdash$ 1-GEN $\rightarrow$ FIP.

\end{prop}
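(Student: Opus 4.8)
The plan is to follow the argument of Diamondstone, Downey, Greenberg and Turetsky \cite{DDGT} showing that a 1-generic computes an FIP function, and to verify that (a) the construction relativizes to an arbitrary set parameter, so that a solution to 1-GEN suffices, and (b) every step can be carried out in RCA$_0$. Fix a real $\mathbb{X}$ coding a sequence $\langle X_i\rangle$ of reals, not all empty. The DDGT construction builds, effectively in $\mathbb{X}$, a sequence of \emph{requirements} (one for each finite set of indices $F$, asking whether $\bigcap_{i\in F}X_i\neq\emptyset$, and, when that intersection is empty, recording a witness that blocks $F$) together with a sequence of dense $\Pi^0_1(\mathbb{X})$ sets of strings; a sufficiently generic $X$ then codes, along its initial segments, a maximal FIP subfamily. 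First I would write out this construction explicitly, with the set $\mathbb{X}$ as a parameter throughout, so that the dense sets of strings are uniformly $\Pi^{0,\mathbb{X}}_1$ and the decoding map $G\mapsto f$ is $\mathbb{X}\oplus G$-computable. By the remark following the definition of 1-GEN, applying 1-GEN to the uniformly $\Sigma_1$ collection obtained by taking a universal $\Sigma_1^{\mathbb{X}}$ set yields an $X$ that is Cohen 1-generic over $\mathbb{X}$, hence meets or avoids every set in the prescribed sequence; in particular it meets every dense $\Pi^{0,\mathbb{X}}_1$ set of strings, which is exactly the genericity the DDGT construction demands.

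Next I would extract, from the generic $X$, the function $f$ and verify that $\{X_{f(i)}\}$ is maximal with respect to the finite intersection property. The FIP part is immediate from the construction: along any path only finitely many requirements have acted by any given stage, and each ``block'' requirement is honoured, so no finite subfamily of $\{X_{f(i)}\}$ can have been certified to have empty intersection; conversely maximality follows because for every index $j$ outside the range of $f$ there is, by density, an initial segment of $X$ witnessing that $X_j$ was correctly rejected, i.e. adding it produces a finite subfamily with empty intersection. This verification is combinatorial and uses only $\Delta^0_1$ comprehension plus $\Sigma^0_1$ induction, both available in RCA$_0$; the key formalization point is that ``$\{X_{f(i)}\}$ is maximal FIP'' is arithmetical in $\mathbb{X}\oplus X$, so the relevant instances of comprehension needed to assemble $f$ and to state the conclusion are provable in RCA$_0$.

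The main obstacle is the RCA$_0$ bookkeeping for the relativized dense sets and the proof that the generic really does meet all of them. In the Turing-degree argument one freely quantifies over ``all stages'' and ``all requirements''; in RCA$_0$ one must instead phrase the density of the $n$-th set of strings as a single $\Pi^{0,\mathbb{X}}_1$ statement uniformly in $n$, and check that 1-GEN as stated (meeting or avoiding every set in a uniformly $\Sigma_1$ collection) delivers meeting each of these dense $\Pi^0_1$ sets — this is where the observation ``1-GEN gives a real Cohen-generic over any parameter'' does the work, and it should require nothing beyond RCA$_0$ since it is a purely syntactic manipulation of $\Sigma_1$ formulas. I do not expect to need any induction beyond $\Sigma^0_1$ here (unlike the 2IP direction, which the authors flag as needing I$\Sigma_2$); the proof of density of each set of strings is a finite-extension argument that terminates in a bounded number of steps, so $\Sigma^0_1$ induction suffices. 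Assembling these pieces gives RCA$_0\vdash$ 1-GEN $\rightarrow$ FIP.
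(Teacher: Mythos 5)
Your overall strategy --- relativize the DDGT forcing argument to the parameter $\mathbb{X}$ and check that it formalizes in RCA$_0$ --- is exactly the route the paper takes, and most of your formalization remarks are on target. But one step, as written, is false and would sink the argument: you claim that an $X$ that is Cohen 1-generic over $\mathbb{X}$ ``meets every dense $\Pi^{0,\mathbb{X}}_1$ set of strings, which is exactly the genericity the DDGT construction demands.'' A Cohen 1-generic meets every \emph{dense $\Sigma_1$} set of strings (it cannot avoid a dense set), but it need not meet dense $\Pi^0_1$ sets; indeed, ``computes a 1-generic meeting a prescribed sequence of dense $\Pi^0_1$ sets of strings'' is precisely the \emph{stronger} hypothesis appearing in the Dzhafarov--Mummert theorem quoted in the introduction, and the content of the DDGT theorem you are invoking is that this extra hypothesis can be dropped. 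So if the dense sets in your setup really were $\Pi^0_1(\mathbb{X})$, then 1-GEN as defined in the paper would not deliver the genericity you need, and the proof would not go through.

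The repair is that the forcing does not require $\Pi^0_1$ density at all. Take conditions to be finite partial functions $g$ with $\bigcap_{i\in\dom(g)}X_{g(i)}\neq\emptyset$ (a $\Sigma_1^{\mathbb{X}}$ set of conditions, coded as strings), and for each $n$ let $V_n$ be the $\Sigma_1^{\mathbb{X}}$ set of conditions with $n$ in the range. These $V_n$ need not be dense; instead one uses the meet-or-avoid dichotomy of 1-GEN directly: if the generic meets $V_n$ then $n\in\ran(f)$, and if it avoids $V_n$ then some condition $g$ along the generic has no extension in $V_n$, which says exactly that $\{X_{g(i)}\}\cup\{X_n\}$ has empty intersection, so $X_n$ cannot be added to $\widetilde f$ while preserving the finite intersection property. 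This is the maximality argument in the paper's sketch; it uses only the $\Sigma_1$ formulation of 1-GEN and formalizes in RCA$_0$ along the lines you describe for the rest of the proof.
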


\begin{proof}[Proof (sketch)]
  Given a sequence of sets $\mathbb{X}=\langle X_i\rangle$, let $f$ be
  a generic for the forcing in which conditions are partial functions
  $g$, with finite domain, such that $\{X_{g(i)}\}$ has nonempty
  intersection.

  If $f$ is generic for this forcing, then $\{X_{f(i)}\}$ clearly has
  the finite intersection property. Furthermore, it is maximal by the
  following argument.

  If, for some $n$, $\{X_{f(i)}\}\cup\{X_{n}\}$ has the finite
  intersection property, then at every stage during the forcing, it
  would have been possible to extend $g$ to have $n$ in the range of
  $g$. In this case, by genericity of $f$, $n$ must be in the range of
  $f$.

  This argument is readily formalized in RCA$_0$.
\end{proof}

The converse to Proposition \ref{1direction} follows from the work in
this paper.

\begin{prop}
  RCA$_0\vdash$ FIP $\rightarrow$ 1-GEN.

\end{prop}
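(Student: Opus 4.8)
The plan is to relativize the construction of Sections~\ref{fip1}--\ref{fip2} to an arbitrary real $Y$ and to verify that its analysis survives in RCA$_0$. Given an instance of 1-GEN --- a uniformly $\Sigma^0_1$ collection of sets of strings, whose parameters we pack into a single real $Y$ --- the remark following the definition of 1-GEN reduces the task to producing an $X$ that is Cohen $1$-generic over $Y$. First I would repeat the labeling of $2^{<\omega}$ exactly as in Section~\ref{fip2}, but with the c.e.\ sets $W_i$ replaced by the sets $W_i^Y$ that are c.e.\ in $Y$, and with $K_s$ replaced by the stage-$s$ approximation to $K^Y = Y'$; the element-placing half of the construction remains computable, so $Y$ enters only through the placement of the $B$ labels. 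The resulting labeling is a $\Delta^0_1(Y)$ process, so each $X_i^Y$ is $\Delta^0_1(Y)$ and the sequence $\mathbb{X}^Y = \langle X_i^Y\rangle$ exists by $\Delta^0_1$ comprehension. Applying the hypothesis FIP to $\mathbb{X}^Y$ --- here using that FIP governs arbitrary, not merely computable, sequences --- yields a function $f$ that is FIP with respect to $\mathbb{X}^Y$.

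Next I would re-run the lemmas of Section~\ref{fip1} relative to $Y$. Lemmas~\ref{L:finitepaths} and~\ref{L:cofinal} are purely combinatorial and relativize verbatim. In Lemma~\ref{L:containsapath} the one delicate point is the step asserting that infinitely many $\sigma$ satisfy $A_\sigma \in \widetilde f$: those $\sigma$ are linearly ordered by $\preceq$, and if only finitely many existed, then bounded $\Sigma^0_1$ comprehension (which follows from I$\Sigma_1$ and so is available in RCA$_0$) would produce that finite set together with its $\preceq$-largest element $\sigma_0$, and the argument in the text would then contradict the maximality of $\widetilde f$. Hence $Z := \Phi^f$ exists and is $\Delta^0_1(f)$. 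Reasoning classically, I would split into two cases. If $Z$ is $1$-generic over $Y$, take $X = Z$. Otherwise, fixing some $i$ for which $Z$ neither meets nor avoids $W_i^Y$, carry out the relativized Lemma~\ref{L:main}: as in the excerpt, one first locates a label $B_{i,j}$ with $j \notin Y'$ lying in $\widetilde f \setminus \widetilde Z$ (via Lemma~\ref{L:cofinal}), so that $\widetilde Z \ne \widetilde f$; then, for any $B_{i_0,j_0} \in \widetilde f \setminus \widetilde Z$, one shows $\{j : B_{i_0,j} \in \widetilde f\} = \overline{Y'}$ and that the existential search defining $\Psi_{i_0}^f$ halts on every input, so that $\Psi_{i_0}^f = Y'$. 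I expect this step to be routine in RCA$_0$ precisely because it selects no \emph{minimal} index and uses no induction above $\Sigma^0_1$; this is exactly the respect in which the FIP case is cheaper than the 2IP verification of Section~\ref{2ip}, which needs I$\Sigma_2$.

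Finally, in the second case $f \oplus Y$ computes $Y'$ through $\Psi_{i_0}^f$, and from $Y'$ one produces an $X$ that is $1$-generic over $Y$ by the usual finite-extension construction: at stage $n$, ask $Y'$ whether the current condition $\tau_n$ has an extension in $W_n^Y$, and set $\tau_{n+1}$ to be such an extension if so and ${\tau_n}^\smallfrown 0$ otherwise. This is primitive recursion with oracle $Y'$, so $X = \bigcup_n \tau_n$ exists in RCA$_0$. Either way one obtains an $X$ that is $1$-generic over $Y$, hence one meeting or avoiding every set in the given collection. The hard part will be the bookkeeping: checking that every lemma of Sections~\ref{fip1}--\ref{fip2} relativizes cleanly and that the only comprehension or induction invoked beyond $\Delta^0_1$-CA is the bounded $\Sigma^0_1$ comprehension above --- in short, confirming that the FIP construction, unlike the priority construction for 2IP, never smuggles in a $\Sigma_2$-induction.
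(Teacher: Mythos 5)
Your proposal is correct and follows essentially the same route as the paper: relativize the FIP construction of Sections~\ref{fip1}--\ref{fip2} to the parameter $Y$, observe that the lemmas are finitary except for the step in Lemma~\ref{L:containsapath} producing infinitely many $A_\sigma\in\widetilde f$ (which you handle with bounded $\Sigma^0_1$ comprehension where the paper invokes low-level induction, both available in RCA$_0$), and conclude via the case split between $\Phi^f$ being $1$-generic over $Y$ and $\Psi_{i_0}^f$ computing $Y'$. Your version simply spells out details (the final finite-extension construction from $Y'$, the totality of $\Psi_{i_0}^f$) that the paper's sketch leaves implicit.
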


\begin{proof}[Proof (sketch)]
  The proofs in Sections \ref{fip1} and \ref{fip2} are mostly finitary
  in nature, and so readily formalized in RCA$_0$.  The infinitary
  arguments mostly follow from the definitions, and so require no
  induction. (For instance, a label appears on cofinally many
  extensions of the path precisely if a given c.e. set of nodes is
  dense along that path.)

  The only use of induction is in Lemma \ref{L:containsapath}, which
  states that an FIP sequence must contain $\lbrace A_\sigma\rbrace$
  for $\sigma$ in a unique infinite path through $2^{<\omega}$. This
  conclusion requires $\Sigma_0$ induction: an infinite path is a
  collection of nodes that are all comparable including one node at
  each level. We show that the nodes are all comparable directly, and
  we show that there is one node at each level because the nodes are
  downward closed and there cannot be a last node.

  This can all be formalized in RCA$_0$.
\end{proof}

In \cite{DM13}, it was proved that FIP implies 2IP over RCA$_0$. We
sketch the proof for completeness.

\begin{prop}\cite{DM13}

  For every $n>m$, RCA$_0\vdash$ nIP $\rightarrow$ mIP.

  For every $n$, RCA$_0\vdash$ FIP $\rightarrow$ nIP.

  In particular, RCA$_0\vdash$ FIP $\rightarrow$ 2IP.
\end{prop}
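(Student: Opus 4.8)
The plan is to handle all three statements by one kind of instance reduction, carried out inside RCA$_0$. It suffices to prove, for every $n>m$, that $n$IP $\to$ $m$IP, and, for every $n$, that FIP $\to$ $n$IP; the last bullet is then the instance $n=2$ of the second assertion. I describe the reduction in the first case, the FIP case being the obvious unbounded variant. (Throughout I adopt the standard reading in which a family with the $k$-intersection property has no empty member, equivalently a ``$k$-tuple'' may repeat entries; this is what makes $k$-intersection hereditary and, e.g., makes the known implication between $(k+1)$IP and $k$IP for degrees literal.)

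Fix $n>m$ and an instance $\mathbb{X}=\langle X_i\rangle$ of the $m$IP problem. I would build $\mathbb{Y}=\langle Y_i\rangle$ by putting into $Y_i$ a private copy $\{\langle 0,x\rangle:x\in X_i\}$ of $X_i$, together with \emph{tokens}: whenever $F\ni i$ is finite with $|F|\le n$ and $\vec w=(w_G)_G$ is a tuple indexed by the subsets $G$ of $F$ with $|G|\le m$ and $w_G\in\bigcap_{j\in G}X_j$, the token $\langle 1,F,\vec w\rangle$ is placed into $Y_j$ for every $j\in F$. So a token certifying that $\{X_j:j\in F\}$ has the $m$-intersection property sits in the joint intersection of exactly the sets $Y_j$ with $j\in F$. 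Membership in $Y_i$ is $\Delta^0_1$ in $\mathbb{X}$ (past decoding, everything to check is bounded), so RCA$_0$ proves $\mathbb{Y}$ exists, and some $Y_i$ is nonempty since some $X_i$ is.

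The heart of the matter is the equivalence, for every $I\subseteq\omega$: the family $\{Y_i:i\in I\}$ has the $n$-intersection property if and only if $\{X_i:i\in I\}$ has the $m$-intersection property. If the latter holds, then for any $F\subseteq I$ with $|F|\le n$ the family $\{X_j:j\in F\}$ still has the $m$-intersection property, so it carries a token lying in $\bigcap_{j\in F}Y_j$; hence every $n$-tuple from $\{Y_i:i\in I\}$ (let $F$ be its support, then $|F|\le n$) has a common element. Conversely, given distinct indices $i_1,\dots,i_n\in I$, a common element of $Y_{i_1},\dots,Y_{i_n}$ is either a point of $X_{i_1}\cap\dots\cap X_{i_n}$ or a token $\langle 1,F,\vec w\rangle$ with $\{i_1,\dots,i_n\}\subseteq F$, and $|F|\le n$ forces $F=\{i_1,\dots,i_n\}$; either way $\{X_{i_1},\dots,X_{i_n}\}$ has the $m$-intersection property, and conversely if it does a suitable token is a common element (and subfamilies using fewer than $n$ distinct indices reduce to the same checks). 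Thus $n$-intersection of $\{Y_i:i\in I\}$ amounts to: every size-$\le n$ subset $F$ of $I$ has $\{X_j:j\in F\}$ with the $m$-intersection property, which—because $m<n$, so every size-$\le m$ subset is itself a legitimate $F$—is exactly the $m$-intersection property of $\{X_i:i\in I\}$. All of this is bounded or $\Sigma^0_1$ bookkeeping and needs only $\Sigma^0_0$ induction.

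To finish: apply the hypothesis $n$IP to $\mathbb{Y}$, getting $f$ with $\{Y_{f(i)}\}$ maximal for the $n$-intersection property; by the equivalence $\{X_{f(i)}\}$ has the $m$-intersection property, and it is maximal, since if $\{X_{f(i)}\}\cup\{X_j\}$ had the $m$-intersection property then $\{Y_{f(i)}\}\cup\{Y_j\}$ would have the $n$-intersection property, against maximality of $\{Y_{f(i)}\}$ (and if $Y_j$ already appears among the $Y_{f(i)}$ then so does $X_j$ among the $X_{f(i)}$, the private copies agreeing). For FIP $\to$ $n$IP one runs the identical construction but inserts tokens for \emph{all} finite $F$, with no bound on $|F|$, for which $\{X_j:j\in F\}$ has the $n$-intersection property, applies FIP to $\mathbb{Y}$, and reads off an $n$IP solution of $\mathbb{X}$; the case $n=2$ is the third bullet. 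The main obstacle—and the reason one cannot simply apply the stronger principle to $\mathbb{X}$ itself—is that the $n$-intersection property is strictly weaker than the $m$-intersection property, so an $n$IP- or FIP-maximal subfamily of $\mathbb{X}$ need not be $m$IP-maximal; the tokens are precisely the device that forces every ``$m$-good'' finite index set to acquire a genuinely nonempty joint intersection in $\mathbb{Y}$, collapsing ``$n$-good in $\mathbb{Y}$'' to ``$m$-good in $\mathbb{X}$.'' The only care needed for RCA$_0$ is keeping $Y_i$ at the $\Delta^0_1$ level by packing the explicit witnesses $\vec w$ into the tokens rather than merely asserting their existence.
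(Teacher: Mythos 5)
Your proposal is correct and takes essentially the same route as the paper's sketch: both collapse the weaker intersection property to the stronger one by inserting, for each finite index set whose $X$-sets have the weaker property, a fresh common element (token) into exactly the corresponding new sets, so that maximal subfamilies transfer back. Your extra touches---packing explicit witnesses into the tokens to keep $\mathbb{Y}$ at the $\Delta^0_1$ level, and retaining private copies of the $X_i$ to handle the maximality edge case when two new sets coincide---are refinements of that same construction rather than a different argument.
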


\begin{proof}[Proof (sketch)]
  We sketch the proof that RCA$_0\vdash$ FIP $\rightarrow$ 2IP.

  Given $\mathbb{X}=\langle X_i\rangle$, we create a new sequence
  $\widetilde{\mathbb{X}}=\langle \widetilde X_i\rangle$ where,
  whenever we see a finite set of the $X_i$ having the 2-intersection
  property, we take an element, and put that element into every
  $\widetilde X_i$ in the corresponding set of $\widetilde X_i$, and
  nowhere else. This ensures that the finite intersection property for
  $\widetilde{\mathbb{X}}$ is equivalent to the 2-intersection
  property for ${\mathbb{X}}$, and hence that any FIP sequence for
  $\widetilde{\mathbb{X}}$ is a 2IP sequence for ${\mathbb{X}}$.

  The proofs of the other statements are similar.
\end{proof}

From our construction, we are able to obtain a partial converse, but
our construction appears to require $\Sigma_2$ bounding.

\begin{prop}
  RCA$_0+I\Sigma_2\vdash$ 2IP $\rightarrow$ 1-GEN.
\end{prop}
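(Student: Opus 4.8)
The plan is to verify that the construction and arguments of Section~\ref{2ip}, which already establish Theorem~\ref{T:2ipcomputesgeneric} in the language of Turing computability, can be carried out inside $\mathrm{RCA}_0 + I\Sigma_2$, with the stronger induction used only at the one place where it is genuinely needed. Since $\mathrm{RCA}_0 \vdash \text{1-GEN} \to \text{FIP}$ and the construction produces a single uniformly computable sequence $\mathbb{X}$, it suffices to show: given (a code for) an $f$ that is 2IP with respect to $\mathbb{X}$, one can prove in $\mathrm{RCA}_0 + I\Sigma_2$ that $\widetilde f$ computes a $1$-generic. The mechanism is exactly Lemma~\ref{L:main2}: either $\Phi^f = Y$ is $1$-generic (in which case $\widetilde f$ computes it directly), or there are $i_0,\nu_0$ with $\Psi_{i_0,\nu_0}^f = K$, and $K$ computes a $1$-generic by a standard finite-extension argument that formalizes in $\mathrm{RCA}_0$. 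So the whole task reduces to auditing Lemmas~\ref{L:2ipisfip} and~\ref{L:main2} for their induction content.

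First I would record which inductions the finitary lemmas require. Lemma~\ref{L:finitepaths} and its infinite-path analogue are purely definitional (no induction), as is the reduction of the $1$-generic problem to meeting/avoiding the $W_i$. Lemma~\ref{L:containsapath} needs only $\Sigma_0$ induction, as already noted in the excerpt for the FIP case. Lemma~\ref{L:2ipisfip} is proved by a double induction — primarily on $|S|$, secondarily on the number of $B$-labeled sets in $S$ — over a statement of the form ``if the finite set $S$ has the $2$-intersection property then some $\sigma$ has $\widetilde\sigma \supseteq S$.'' The matrix of this induction quantifies over stages of the construction and over nodes $\sigma$, and the key sub-claims (``$B_{i_1,j_1,\nu_1}$ is only ever placed on extensions of $\sigma_0$,'' ``in between the placements of the two labels, one $\nu$-value acquired a higher-priority extension'') are $\Sigma_2$ statements about the c.e. labeling. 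Because $|S|$ and the $B$-count are bounded finite parameters, this double induction needs at worst $\Sigma_2$ induction; I would argue it is in fact just $B\Sigma_2$, since each induction step only asserts the existence of a witnessing $\sigma$ from the existence of one at the previous level. This accounts for the claim in the section's preamble that $B\Sigma_2$ is used in two places.

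Next I would go through Lemma~\ref{L:main2}, which is the heart of the matter. The existence argument for $i_0$ — ``if every $i < i_1$ fails the property then $i_1$ has it'' — is a bounded search below $i_1$, hence formalizable using $B\Sigma_2$ (a bounded collection of $\Sigma_2$ facts about where $B_i$ labels land along $Y$). The harder piece is the characterization ``$B_{i,j,\nu} \notin \widetilde Y$ occurs cofinally along $Y$ iff $i = i_0$, $\nu = \nu_0$, $j \notin K$,'' whose forward direction uses the stabilization of $\nu$-values after a certain stage. Pinning down that stage — ``the stage by which every $\sigma \prec \nu_0$ is seen to have an extension with a $B_i$ label for some $i < i_0$'' — requires knowing that finitely many $\Sigma_1$ events have all occurred, and then reasoning uniformly about all later stages. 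Locating this stage from the (finitely many) facts that each such $\sigma$ eventually acquires such an extension is precisely a $\Sigma_2$ (in fact $B\Sigma_2$) task: a bounded family of $\Sigma_1$ facts must be collected into one bound. The genuinely $\Sigma_2$-\emph{inductive} step, the one place $I\Sigma_2$ (rather than mere $B\Sigma_2$) is needed, is the final "$i > i_0$" case combined with the verification that $\widetilde f = \widetilde Y \cup \{B_{i_0,j,\nu_0} : j \notin K\}$ is genuinely maximal: ruling out \emph{all} $i > i_0$ at once requires an unbounded induction whose matrix ("$B_{i,j,\nu}$ does not occur cofinally along $Y$") is $\Pi_2$, equivalently proving a $\Sigma_2$ statement by induction. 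This is the $I\Sigma_2$ usage flagged in the preamble.

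The main obstacle, then, is not any single hard lemma but the bookkeeping: I must exhibit, for each invocation of induction or collection in the audit above, an explicit formula in the arithmetical hierarchy and confirm it sits at the $\Sigma_2$ level, while checking that everything else genuinely lives at $\Sigma_0$–$\Sigma_1$ and hence needs no hypothesis beyond $\mathrm{RCA}_0$. I expect the subtlest point to be the maximality half of Lemma~\ref{L:main2} — showing that \emph{no} real outside $\widetilde Y \cup \{B_{i_0,j,\nu_0}: j \notin K\}$ can be adjoined to $\widetilde f$ — because this is where an unbounded quantifier over label-indices $i > i_0$ is essential and cannot be replaced by a bounded search, so $B\Sigma_2$ alone will not suffice and full $I\Sigma_2$ is required. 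Once this single step is isolated and its inductive formula identified as $\Sigma_2$, the rest of the proof is a routine transcription of Section~\ref{2ip} into $\mathrm{RCA}_0$, and the conclusion $\mathrm{RCA}_0 + I\Sigma_2 \vdash \text{2IP} \to \text{1-GEN}$ follows.
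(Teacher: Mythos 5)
Your overall plan --- reduce the proposition to an audit of the induction content of Lemmas \ref{L:2ipisfip} and \ref{L:main2} --- is exactly the paper's, and your treatment of Lemma \ref{L:2ipisfip} (a $\Sigma_1\rightarrow\Sigma_1$, hence $\Delta_2$, inductive statement, costing only $B\Sigma_2$ by Slaman's theorem that $I\Delta_2\equiv B\Sigma_2$) matches the paper. But you have misplaced the one genuine use of $I\Sigma_2$, and the step you dismiss as routine is precisely the step that needs it. You claim the existence of $i_0$ in Lemma \ref{L:main2} is ``a bounded search below $i_1$, hence formalizable using $B\Sigma_2$.'' The property defining $i_0$ --- no $B_{i_0}$ label on an initial segment of $Y$ besides $\lambda$, and every initial segment of $Y$ has an extension carrying a $B_{i_0}$ label --- is $\Pi_2$, and what is required is the \emph{least} $i$ satisfying it. Minimization of a $\Pi_2$ predicate is $L\Pi_2$, which is equivalent to $I\Sigma_2$ and strictly stronger than $B\Sigma_2$; boundedness of the search domain by $i_1$ does not rescue you. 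The paper makes this failure mode explicit: without $I\Sigma_2$ there could be a cut below which every strategy acts finitely often and above which every strategy acts infinitely often but is injured infinitely often by higher-priority strategies also above the cut, so no least ``successful'' $i_0$ need exist.

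Conversely, the place where you locate the essential use of $I\Sigma_2$ --- ruling out all $i>i_0$ in the cofinality characterization, i.e.\ the maximality of $\widetilde Y\cup\{B_{i_0,j,\nu_0}:j\notin K\}$ --- does not in fact use induction over $i$ at all. For each individual label $B_{i,j,\nu}$ with $i>i_0$, the argument is direct: if it occurs cofinally along $Y$ and not on $\lambda$ then $\nu\prec Y$, and the next $B_{i_0}$ label placed on an extension of some $\sigma\succ\nu$ with $\sigma\prec Y$ forces $B_{i,j,\nu}$ onto $\lambda$, a contradiction. This is a pointwise $\Sigma_1$/$\Pi_2$ deduction with no inductive content. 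The second legitimate use of $B\Sigma_2$ is instead the stabilization of the $\nu$-values of strategy $i_0$: collecting the finitely many $\Sigma_1$ facts that each $i<i_0$ acts on extensions of only finitely many initial segments of $Y$ into a single bound. Your audit as written would therefore certify the wrong step and leave the genuinely $I\Sigma_2$-dependent step unproved; the proof cannot be completed along the lines you describe without repairing this attribution.
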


\begin{proof}[Proof (sketch)]
  Lemma \ref{L:2ipisfip} is a proof that uses induction, and even
  after simplification appears to require $\Delta_2$ induction. By a
  result of Slaman \cite{S}, $\Delta_2$ induction is equivalent to
  $\Sigma_2$ bounding.  The statement in question is the statement
  that says that given any finite set of labels used in the second
  construction, those labels have the 2-intersection property if and
  only if they appear together on some finite path.

  A priori, a label being used in the construction is a $\Sigma_1$
  event, but we may rephrase the lemma to be quantified over all
  labels that could potentially be used, and only over sets of size 2
  or greater. In this case, the lemma would vacuously true for labels
  not used in the construction: they neither appear on paths, nor have
  the 2-intersection property with anything.

  Furthermore, the ``if'' part of the statement can be proven globally
  without induction: If the labels appear along a finite path, then
  they have an element in all of them, and so they have the
  2-intersection property.

  Thus, induction is only used to prove that if the labels have the
  2-intersection property then they appear together along some
  path. This is still a $\Sigma_1\rightarrow\Sigma_1$ statement, which
  is a $\Delta_2$ statement.

  \vspace{5pt}

  Lemma \ref{L:main2} also requires induction for two reasons, and
  this is where we use $\Sigma_2$ induction (or, more precisely,
  $L\Pi_2$, which is equivalent to $I\Sigma_2$. For background on
  induction principles, see Theorem 5.1 of \cite{H}).

  First we need a highest priority $i_0$ that acts infinitely often
  along extensions of initial segments of $Y$. Strategy $i$ acting
  infinitely often along extensions of initial segments of $Y$ is
  $\Pi_2$:
$$(\forall\sigma\prec Y)(\exists\tau\succ\sigma) \text{(some $B_i$ label appears on $\tau\ \&$ no $B_i$ label appears on $\sigma$.)}$$

Next we need to know that after some initial segment of $Y$, strategy
$i_0$ can stop changing its $\nu$ values. This apparently requires
$\Sigma_2$ bounding: if each $i<i_0$ acts on extensions of only
finitely many initial segments of $Y$, then we need to know that there
is some initial segment of $Y$ after which strategy $i_0$ will no
longer need to change its $\nu$ value on extensions of that segment,
allowing it to act infinitely often with the same labels and achieving
its infinitary requirement. This is an instance of $\Sigma_2$
bounding.
\end{proof}

A note on the proof:

It is tempting to attempt to remove the first instance of $\Sigma_2$
induction by letting $i_1$ be any $i$ such that $Y$ neither meets nor
avoids $W_i$, and then claiming that either some higher priority
strategy acts infinitely often or we can win with strategy
$i_1$. Unfortunately, this does not seem to work, because we need the
higher priority strategy to act infinitely often and also not to have
any even higher priority strategies act infinitely often.

Without $\Sigma_2$ induction, one could potentially imagine a cut below which every strategy acts finitely often, and above which every strategy acts infinitely often, but gets injured infinitely often by higher priority strategies that are also above that cut.\\

Synthesizing the results of this section, we obtain the following
implications.

\begin{thm}
  \

  RCA$_0\vdash$ FIP $\leftrightarrow$ 1-GEN.

  RCA$_0+I\Sigma_2$ proves the following are equivalent.
  \begin{itemize}[nolistsep]

  \item
    FIP

  \item
    1-GEN.

  \item
    nIP (for any value of $n$)
  \end{itemize}

\end{thm}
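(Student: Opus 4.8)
The plan is to obtain this theorem purely as a synthesis of the propositions already established in this section; no new construction is needed, and the only real task is to chain the implications at the correct level of induction.

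For the first assertion I would argue directly. Proposition~\ref{1direction} gives RCA$_0\vdash$ 1-GEN $\to$ FIP, and the proposition immediately following it gives RCA$_0\vdash$ FIP $\to$ 1-GEN; combining the two yields RCA$_0\vdash$ FIP $\leftrightarrow$ 1-GEN. The point I would stress is that neither direction uses more than $\Sigma_0$ induction: 1-GEN $\to$ FIP is a genericity argument with no induction at all, and FIP $\to$ 1-GEN formalizes the essentially finitary constructions of Sections~\ref{fip1} and~\ref{fip2}, whose sole appeal to induction was the $\Sigma_0$ induction in Lemma~\ref{L:containsapath}. So this equivalence is unconditional over RCA$_0$.

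For the second assertion I would work inside RCA$_0+I\Sigma_2$ and close a cycle through the three principles, with $n$ fixed and $n\ge 2$ throughout. (For $n=1$ the principle 1IP is outright provable in RCA$_0$, since the collection of all nonempty columns is already a maximal $1$-intersecting subfamily and a nonempty $\Sigma^0_1$ set is the range of a function; so 1IP cannot be equivalent to the others, and ``nIP for any value of $n$'' must be read as $n\ge 2$.) The cycle is:
\begin{enumerate}
\item FIP $\to$ nIP, already provable over RCA$_0$ by the relabeling argument sketched above and in \cite{DM13};
\item nIP $\to$ 2IP, which for $n>2$ is the monotonicity proposition nIP $\to$ mIP specialized to $m=2$, and is trivial for $n=2$;
\item 2IP $\to$ 1-GEN, which is the proposition whose proof invokes $I\Sigma_2$ (via Lemma~\ref{L:2ipisfip} and two places in Lemma~\ref{L:main2}); and
\item 1-GEN $\to$ FIP, which is Proposition~\ref{1direction} again.
\end{enumerate}
Composing these four implications shows that FIP, 1-GEN and nIP (for each fixed $n\ge 2$) are pairwise equivalent over RCA$_0+I\Sigma_2$.

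The substantive difficulty here is not in this theorem but upstream: the genuine obstacle---the exact induction strength needed for 2IP $\to$ 1-GEN---was already confronted in the preceding proposition. What remains for me to be careful about is the stratification, namely verifying that the only link in the cycle that needs more than RCA$_0$ is 2IP $\to$ 1-GEN; in particular that the FIP $\to$ nIP and nIP $\to$ mIP arguments of \cite{DM13}, being purely combinatorial ``relabeling'' constructions, go through with $\Sigma_0$ induction. Granting this, the first bullet is unconditional, and $I\Sigma_2$ is dragged into the second bullet precisely because routing from $n$IP back to FIP forces one through the $2$IP construction of Section~\ref{2ip} rather than the lighter FIP construction of Sections~\ref{fip1}--\ref{fip2}.
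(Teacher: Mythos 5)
Your proposal is correct and matches the paper's route exactly: the paper offers no separate argument for this theorem beyond ``synthesizing the results of this section,'' i.e.\ chaining Proposition~\ref{1direction}, its converse, the monotonicity propositions from \cite{DM13}, and the $I\Sigma_2$ proposition for 2IP $\rightarrow$ 1-GEN, which is precisely your cycle. Your added caveat that $n$ must be read as $n\ge 2$ (since 1IP is outright provable in RCA$_0$ and so cannot be equivalent to 1-GEN) is a sensible clarification that the paper leaves implicit.
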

In particular, the below statements are all equivalent in $\omega$
models of RCA$_0$. We do not know whether these equivalences hold over
RCA$_0$ in general.

\begin{question}\label{q1}
  Does RCA$_0$ prove FIP $\leftrightarrow$ 2IP?

\end{question}

Indeed, we have an entire tower of open questions, although a positive
answer to Question \ref{q1} would resolve all of them simultaneously.

\begin{question}

  Do there exist $n,m$ such that RCA$_0$ proves nIP $\leftrightarrow$
  mIP?

  What about RCA$_0+B\Sigma_2$?

\end{question}


\begin{thebibliography}{99}

\bibitem{CHKS} Barbara F. Csima, Denis R. Hirschfeldt, Julia
  F. Knight, and Robert I. Soare, Bounding prime models, \emph{Journal
    of Symbolic Logic}, Vol.  69 (2004), no. 4, 1117-1142.

\bibitem{DDGT} David Diamondstone, Rodney Downey, Noam Greenberg and
  Daniel Turetsky, The finite intersection property and genericity,
  \emph{Mathematical Proceedings of the Cambridge Philosophical
    Society.}  Vol. 160 (02), 2016, 279 - 297.

\bibitem{DM12} Damir Dzhafarov and Carl Mummert, Reverse mathematics
  and properties of finite character, \emph{Annals of Pure and Applied
    Logic}, Vol. 163, (2012), 1243-1251

\bibitem{DM13} Damir Dzhafarov and Carl Mummert, On the strength of
  the finite intersection property, \emph{Israel Journal of
    Mathematics}, Vol. 196 (2013), 345-361.

\bibitem{HSS} Denis R. Hirschfeldt, Richard A. Shore, and Theodore
  A. Slaman, The atomic model theorem and type omitting,
  \emph{Transactions of the American Mathematical Society}, Vol.  361
  (2009), no. 11, 5805-5837

\bibitem{H} Jeffry L. Hirst, Combinatorics in Subsystems of Second
  Order Arithmetic, PhD Thesis, Pennsylvania State University, 1987.

\bibitem{RR} Herman Rubin and Jean E. Rubin, \emph{Equivalents of the
    axiom of choice}, North-Holland Publishing Co., Amsterdam, 1970,
  Studies in Logic and the Fo undations of Mathematics.

\bibitem{S} Theodore Slaman, $Sigma_n$-bounding and
  $Delta_n$-induction, \emph{Proceedings of the American Mathematical
    Society}, Vol. 132 (2004), no. 08, 2449-2456 (electronic).


\bibitem{Si} Stephen Simpson, \emph{Subsystems of Second Order
    Arithmetic, 2nd Edition} ASL Perspectives in Logic, Cambridge
  University Press, 1999.

\bibitem{So} Robert Soare, {\em Recursively Enumerable Sets and
    Degrees}, Springer-Verlag, 1987.

\end{thebibliography}
\end{document}